\newtheorem{thm}{Theorem}[section]
\newtheorem{cor}[thm]{Corollary}
\newtheorem{lem}[thm]{Lemma}
\newtheorem{pro}[thm]{Proposition}
\theoremstyle{remark}
\newtheorem{rem}[thm]{Remark}
\newcommand{\Rd}{{\mathbb{R}^d}}
\begin{document}

\title{A deterministic optimal design problem for the heat equation\\\vskip 0.8cm}
\author{ Heiko Gimperlein\thanks{Maxwell Institute for Mathematical Sciences and Department of Mathematics, Heriot--Watt University, Edinburgh, EH14 4AS, United Kingdom, email: h.gimperlein@hw.ac.uk}  \thanks{Institute for Mathematics, University of Paderborn, Warburger Str.~100, 33098 Paderborn, Germany} \and Alden Waters\thanks{Department of Mathematics, University College London, Gower Street, London, WC1E 6BT, United Kingdom, email: alden.waters@ucl.ac.uk }}
\date{}

\maketitle \vskip 0.5cm
\begin{abstract}
\noindent For the heat equation on a bounded subdomain $\Omega$ of $\Rd$, we investigate the optimal shape and location of the observation domain in observability inequalities. A new decomposition of $L^2(\Rd)$ into heat packets allows us to remove the randomization procedure and assumptions on the geometry  of $\Omega$ in previous works. The explicit nature of the heat packets gives new information about the observability constant in the inverse problem. 
\end{abstract}

\vskip 1.0cm

\section{Introduction}\label{intro}

This article considers an optimal design problem for the heat equation: What is the optimal shape and location of a thermometer if we would like to reconstruct the heat distribution in a domain? We aim to introduce techniques from microlocal analysis, related to heat packet decompositions, in order to address a rigorous formulation of this question. The explicit nature of the heat packets sheds light on certain randomization assumptions and technical hypotheses in previous works. \\

Optimal design problems for the placement of sensors have attracted much interest in analysis and computational mathematics. Recent works include \cite{blr, burq, burqgerard, hh, ptz}, which consider observability and optimal design problems for  wave and Schr\"{o}dinger equations. For the heat equation, an observability estimate goes back to \cite{lr}, and \cite{ptzheat} considers a simplified optimal design problem with random initial conditions. The dissipativity of the heat equation makes the reconstruction of high frequencies much less stable, one of the issues addressed in this article.\\

For a precise statement of a model problem, we consider a solution $u(t,x)$ to the heat equation 
$u_t = \Delta u$
in a smooth domain $\Omega \subset \Rd$
with homogeneous Dirichlet boundary conditions and arbitrary initial condition $u(0,\cdot)\in C^\infty_c(\Omega)$, c.f Appendix Theorem \ref{wp} for well-posedness estimates. Given $T>0$ and a measurable subset $\omega \subset \Omega$, we denote by $C_T(\omega)$  the best constant such that 
\begin{equation}\label{myeq}
C_T(\omega) \int_\Omega |u(T,x)|^2 \ dx \leq \int_0^T \int_\omega |u(t,x)|^2\ dt\ dx 
\end{equation}
when $u(0, \cdot)\in C_c^\infty(\Omega)$. $C_T(\omega)$ gives an account for the well-posedness of the inverse problem of reconstructing $u$ from measurements over $[0,T]\times \omega$.\\ 

This article studies for which subdomains the heat equation is observable, and whether there are optimal ones.  In the paper \cite{kv}, a parametrix to the linear Schrodinger equation is built. We build solutions to the heat equation after their model, but we change the variable $it\mapsto t$. We rescale their original high frequency data to obtain information about $L^2(\Omega)$ data. Thus, a key new ingredient in our investigation is a decomposition of the initial data $$u = \sum_{n} c_n \phi_n\ , \quad \phi_n(t,x)=\Big(\frac{\sigma}{\sqrt{2\pi}(\sigma^2 +t)}\Big)^{d/2}\exp\Big(i x \cdot \xi_n - t|\xi_n|^2 - \frac{|x+2i\xi_nt|^2}{4 (\sigma^2 +t)}\Big)$$ of $u$ into heat packets following \cite{kv}. Here $c_n$ are constants,  $\xi_n$ belongs to a lattice in $\Rd$, and $\sigma$ is a frequency parameter; we specify them later.  While this decomposition is valid in whole space, we may reduce to this case after approximating the heat kernel on $\Omega$. The heat packets replace the propagated Dirichlet eigenfunctions of $\Omega$, but still allow almost explicit calculations.

We exhibit a family of optimal design problems which accurately approximate the true optimal design problem when the initial data is \emph{not} high frequency and the time scale is of the same size as the frequency. We let $C_T^A(\omega)$ denote the constant associated to this approximate optimal design problem.  We let $0<\epsilon\ll 1$ be a small constant; the precise conditions on $\epsilon$ will be specified in terms of a fixed positive constant $\eta$ with $\eta<1$ and the timescale $T$. The parameter $\epsilon$ describes the number of initial data points $\xi_n$ we need in our frame-based approximation.  The parameter $\eta$ describes the convergence of the approximate observability constant $C_T^A(\omega)$ to $C_T(\omega)$. In particular we show that $\epsilon$ needs to increase  with the timescale $T$, and $\epsilon$ may increase monotonously with $\eta$. 

For computational clarity, we assume that $\psi\in C_c^{\infty}(\Omega)$, with supp$\psi\subset[-1,1]^d$. Our initial data $u(0,x)$ we say is of the form
\begin{align*}
u(0,x)=\epsilon_{0}^{-\frac{d}{2}}\psi\left(\frac{x}{\epsilon_{0}}\right).
\end{align*}
with $\epsilon_{0}$ a number in $(0,\frac{1}{2}\textrm{diam}(\Omega))$ which is sufficiently small enough for $\mathrm{supp}u(0,x)\subset \Omega$. The data is at minimal distance $\delta$ from $\partial\Omega$ with $\delta\in (\sqrt{\epsilon_0},1)$. We search for an optimal subdomain $\omega$ in the set
\begin{align*}
&\mathcal{M}_{M}=\{\omega \subset \Omega\ |  \ 
 \omega \,\, \textrm{is measureable and of Lebesgue measure}\,\, |\omega|=M|\Omega| \}\ .
\end{align*}
It accounts for the fact that we measure the solutions on a subdomain of $\Omega$ with a fixed volume.  

The classical approach of \cite{ptzheat} involves
separation of variables using a basis of eigenfunctions $\Delta \Psi_j = - \lambda_j \Psi_j$. Here one would decompose the solution into this basis as
$u(t,x) = \sum_{j=1}^\infty a_j e^{-t\lambda_j} \Psi_j(x).$
If we define $b_j = a_j e^{-T\lambda_j}$, the question becomes to  examine 
\begin{align*}
C_T(\omega) &=\inf_{\sum_{j=1}^\infty |b_j|^2 = 1} \int_0^T \int_\omega \left|\sum_{j=1}^\infty b_j e^{\lambda_jt} \Psi_j(x)\right|^2 dx\ dt\\
& = \inf \sigma \left(\frac{e^{(\lambda_j + \lambda_k)T}-1}{\lambda_j + \lambda_k} \ \int_\omega \Psi_j(x) \Psi_k(x)\ dx\right) \ ,
\end{align*}
where $\sigma$ denotes the spectrum of the matrix. This is a hard spectral problem since little is known about $\int_\omega \Psi_j(x) \Psi_k(x)\ dx$ even in the case of the disk: the restriction of inner products of arbitrary Bessel functions to subsets $\omega \subset \Omega$ cannot be computed explicitly.

 In order to avoid this problem, Privat, Trelat and Zuazua \cite{ptzheat} replace $a_j$ by a sequence of real-valued random variables $\{\beta_j^\nu a_j\}_{j \in \mathbb{N}, \nu \in \mathcal{X}}$ and thereby introduce a random field $u_\nu(t,x)$. The $\beta_j^\nu$ are independent identically distributed, of mean $0$, variance $1$ and fast decay (e.g.~Bernoulli). They then study the case of an averaged observability constant
\begin{align*}
C_T^{rand}(\omega) & = \inf \sigma \left(\frac{e^{(\lambda_j + \lambda_k)T}-1}{\lambda_j + \lambda_k} \ (\mathbb{E} \beta_j^\nu \beta_k^\nu)\ \int_\omega \Psi_j(x) \Psi_k(x)\ dx\right) \\
 &= \inf \sigma \left(\frac{e^{2\lambda_j T}-1}{2\lambda_j} \ \int_\omega \Psi_j(x)^2\ dx\right)\\
&= \inf_{j \in \mathbb{N}}  \frac{e^{2\lambda_j T}-1}{2\lambda_j} \ \int_\omega \Psi_j(x)^2\ dx\ .
\end{align*}
From the above we see that removing the randomization hypothesis reduces to a difficult spectral problem: To determine inner products on the subdomain $\omega$ of eigenfunctions for the Dirichlet Laplacian on $\Omega$.  We instead introduce an approximate problem, denoted by $A$ which consists of a finite number of inner products of explicit functions. For the uniqueness results, we must randomize the initial data. We also assume that $u(t,x)$ intersects the observation domain $\omega$ at some time $t_0\in (0,T)$ and this intersection is bounded below by $m_0$. We set $m_1=m_0\|\psi\|_{C^0(\Omega)}^{-1}\eta$. This criterion is related to \cite{blr}. 

Our main result is: 
\begin{thm}\label{maintheorem}
Let $M \in (0,1)$ and $T<\min\{\epsilon_0^{2+s}m_1(C_{s,d}\|\psi(x)\|_{C^{2+s}(\Omega)})^{-1}, m_1\delta^4\}$, $s>d/2$ an integer, and $\Omega \subset \Rd$, then \\
a) $C_T(\omega)\gtrsim C^{A}_T(\omega)>0$ for all $\omega \in \mathcal{M}_M$.\\ 
b) There exists a unique $\omega^\ast \in \mathcal{M}_M$ such that $C^{A,rand}_T(\omega^\ast)\geq C^{A,rand}_T(\omega)$ for all  $\omega \in \mathcal{M}_M$. \\
c) The subset $\omega^\ast$ is open and semi--analytic. In particular, the Hausdorff measure of $\partial \omega^\ast$ is $0$. 
\end{thm}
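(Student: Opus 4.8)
The plan is to treat the three parts in order; (a) carries the analytic weight, while (b)--(c) are a real-analytic optimal-design argument in the spirit of \cite{ptzheat}.

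For (a), I would first insert the heat-packet decomposition of \cite{kv}. Approximate the prescribed datum $u(0,x)=\epsilon_0^{-d/2}\psi(x/\epsilon_0)$ by a \emph{finite} superposition $\sum_{n\in F}c_n\phi_n(0,\cdot)$, the cardinality of $F$ governed by $\epsilon$, with an $L^2(\Rd)$-error controlled by $\|\psi\|_{C^{2+s}}$, the scale $\epsilon_0$ and $T$; this is exactly where the hypothesis $T<\epsilon_0^{2+s}m_1(C_{s,d}\|\psi\|_{C^{2+s}})^{-1}$ enters, forcing the frame-truncation error to be small compared to $m_1$. Propagating by the free heat semigroup gives $u(t,x)=\sum_{n\in F}c_n\phi_n(t,x)+(\text{error})$ on all of $\Rd$, and replacing the Dirichlet heat evolution on $\Omega$ by the whole-space one costs a boundary-layer error which, because the data sits at distance $\delta$ from $\partial\Omega$, is controlled on $[0,T]$ by $T/\delta^{2}$; the hypothesis $T<m_1\delta^{4}$ makes this small as well, and here I would invoke Appendix Theorem~\ref{wp}. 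Combining the two approximations, the measurement functional and the final energy $\|u(T,\cdot)\|^2_{L^2(\Omega)}$ for $u$ agree, up to these controlled errors, with the corresponding quantities for a heat-packet superposition; defining $C^A_T(\omega)$ via the infimum of the heat-packet ratio (equivalently, the smallest eigenvalue of the finite weighted Gram pencil $\big(\int_0^T\!\!\int_\omega\phi_j\overline{\phi_k}\,dt\,dx\big)$ against $\big(\langle\phi_j(T,\cdot),\phi_k(T,\cdot)\rangle_{L^2(\Omega)}\big)$), the error bounds yield $C_T(\omega)\gtrsim C^A_T(\omega)$. Positivity of $C^A_T(\omega)$ follows because the heat packets are real-analytic and, having distinct frequencies $\xi_n$, linearly independent, so their restrictions to $[0,T]\times\omega$ stay independent and the Gram pencil is positive definite; the hypothesis that $u$ meets $\omega$ with mass at least $m_0$ is what renders the lower bound explicit.

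For (b), write the randomized approximate constant as $C^{A,rand}_T(\omega)=\min_{n\in F}\gamma_n\int_\omega G_n(x)\,dx$ with $\gamma_n>0$ and $G_n(x)=\int_0^T|\phi_n(t,x)|^2\,dt>0$. I would first check that each $G_n$ is real-analytic on $\Omega$ (the integrand is jointly real-analytic in $(t,x)$ and the $t$-interval is compact) and that $\{1\}\cup\{G_n\}_{n\in F}$ is linearly independent, using the explicit Gaussian form of the $\phi_n$ and the distinctness of the $\xi_n$. For existence, relax to $\widehat J(a)=\min_{n\in F}\gamma_n\int_\Omega aG_n$ on the weak-$*$ compact convex set $\{a\in L^\infty(\Omega;[0,1]):\int_\Omega a=M|\Omega|\}$; $\widehat J$ is concave and weak-$*$ upper semicontinuous, hence attains its maximum. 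To see every maximizer $a$ is a characteristic function, suppose the fuzzy set $I=\{0<a<1\}$ had positive measure: a mass-preserving first variation on $I$ together with a Farkas/minimax step produces nonnegative multipliers $\mu_n$, not all zero, and a constant $\lambda$ with $\sum_{n\in F}\mu_n\gamma_nG_n\equiv\lambda$ a.e.\ on $I$; real-analyticity propagates this identity to all of connected $\Omega$, contradicting the linear independence above. Hence $a=\chi_{\omega^\ast}$, and uniqueness follows from concavity: if $\omega_1^\ast,\omega_2^\ast$ were both optimal, so would be $\tfrac12(\chi_{\omega_1^\ast}+\chi_{\omega_2^\ast})$, which is a characteristic function only if $|\omega_1^\ast\triangle\omega_2^\ast|=0$.

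For (c), the optimality conditions for the genuine optimal set give $\omega^\ast=\{x\in\Omega:h(x)>c\}$ up to a null set, where $h:=\sum_{n\in F}\mu_n\gamma_nG_n$ for multipliers $\mu_n\ge0$ and a threshold $c$ fixed by $|\omega^\ast|=M|\Omega|$. Since $h$ is real-analytic, $\omega^\ast$ is open and semi-analytic and $\partial\omega^\ast\subset\{h=c\}$; as $M\in(0,1)$ forces $h$ non-constant and $\not\equiv c$, the set $\{h=c\}$ is a proper real-analytic subvariety of connected $\Omega$, hence of dimension at most $d-1$, so its $d$-dimensional Hausdorff (Lebesgue) measure vanishes, which is the assertion. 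The main obstacle I expect is the error analysis in (a): one must keep both the frame-truncation error and the Dirichlet-versus-free heat-kernel error small relative to the quantities that bound $C^A_T(\omega)$ from below, which is precisely what dictates the shape of the hypotheses on $T,\epsilon_0,\delta$ and forces the number of packets $\epsilon$ to grow with $T$; by comparison, (b)--(c) are a by-now-standard real-analytic optimal-design argument once the linear independence of $\{1\}\cup\{G_n\}$ is in hand.
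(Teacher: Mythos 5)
Your architecture matches the paper's almost exactly: part (a) via the heat-packet frame of Lemma \ref{kvlemma}, Kac's principle to trade $\exp(t\Delta_\Omega)$ for $\exp(t\Delta_{\Rd})$ at a cost controlled by $\delta$ and $T$, and a two-sided comparison of the packet ratio with the true ratio (the paper's Corollary \ref{majorization}); parts (b)--(c) via relaxation to $\overline{\mathcal{U}}_M$, weak-$*$ compactness and concavity, a minimax/multiplier step, the level-set representation $\omega^\ast=\{\varphi>\lambda\}$, and analyticity of $\varphi$. Two sub-steps differ in a way worth recording. First, you obtain $C^A_T(\omega)>0$ from positive-definiteness of the finite Gram pencil of linearly independent analytic packets, whereas the paper gets it from the sandwich inequality together with the Lebeau--Robbiano observability result \cite{lr} ($C_T(\omega)>0$); your route is more self-contained and purely finite-dimensional, the paper's ties $C^A_T$ to $C_T$ from both sides. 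Second, what you call ``linear independence of $\{1\}\cup\{G_n\}$'' is exactly the paper's hypothesis $(H_1)$, which it verifies by extending $\sum_j\alpha_j\int_0^T|\phi_j|^2\,dt$ holomorphically, propagating the constancy from $E$ to all of $\mathbb{C}^d$, and then integrating over $\Rd$ to force $C=0$ (a nonzero constant has infinite integral while each packet term has finite integral) --- you should supply this or an equivalent verification rather than assert independence from ``distinctness of the $\xi_n$.'' Finally, by working with a finite index set $F$ from the outset you bypass the paper's hypothesis $(H_2)$ (the liminf gap that reduces the infimum over all frequencies to a finite truncation $J_{N_0}$); this is consistent with the truncation built into Lemma \ref{kvlemma}, but it is the step that carries the content of Theorem \ref{nospilltheorem}, so it cannot be dispensed with if one wants the $N\to\infty$ statement as well.
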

The randomized initial condition required in \cite{ptzheat}, which also included strong implicit assumptions on the geometry of $\Omega$. In order to study the randomized problem further, Privat, Trelat and Zuazua require strong assumptions on the level sets of these eigenfunctions ( ($H_1$) in \cite{ptzheat}) and thereby on $\Omega$. Here, we show that removing randomization amounts to the computation of a finite number of inner products of explicit functions, for which the assumptions on level sets can be verified. We show that the problem can be modeled for arbitrary initial data in $L^2(\Omega)$ by Gaussians, suggesting that a deeper analysis of the heat kernel could be useful in including the randomized terms for the uniqueness results. In particular we prove one cannot ignore the non-randomized terms entirely in this scenario. We provide quantitative upper and lower bounds on their contribution to the observability constant. 

The number of grid points required for the reconstruction of $C_T^A(\omega)$ increases drastically at high frequencies. Of course, the initial data must be in $H_0^1(\Omega)$ for the solution to even exist, so in some sense this phenomenon is to be expected. The poor convergence for the high frequencies suggests that the use of monochromatic waves, which is introduced by randomization introduces a significant loss of information in the optimal design problem. In particular, the randomized observability constant only provides an upper bound $C_T^{rand}(\omega)\geq C_T(\omega)$, while we have
\begin{align*}
C_T(\omega)\geq C_T^A(\omega).
\end{align*}
In essence, we are looking at the worst case scenarios for the behavior of $\omega$. The reason we cannot extend to infinite times, is that the errors in the approximation argument dominate for large times. We also give an example of high frequency data showing the difficulty of removing the randomization assumption of \cite{ptz}. \\

It should be noted that \cite{ptzheat} is the first paper to construct an optimal observability domain $\omega$ in \emph{any} sense, and we rely on their ideas \emph{extensively}. It is an earlier result of \cite{lr} that $C_T(\omega)>0$ for compact connected smooth manifolds.\\

The heat packets lead to a numerical approximation scheme, where one determines a subset $\omega_N^\ast \in \mathcal{M}_M$ with the best constant in \eqref{myeq} among finite linear combinations of heat packets 
\begin{align*}
u =\sum_{|n|\leq N} c_n \phi_n\ .
\end{align*}
 A basic problem is the convergence of $\omega_N^\ast$ to $\omega^\ast$ as $N \to \infty$ in Hausdorff distance.
There are well-known counterexamples due to spillover phenomena \cite{hh} for hyperbolic equations. However, the penalization of high frequencies in the heat equation allows to prove such a result, similar to \cite{ptzheat} for randomized initial conditions:

\begin{thm} \label{nospilltheorem}
a) $\omega_N^\ast$ is uniquely determined, semi--analytic and open.\\
b) The sequence $\omega_N^\ast$ converges to $\omega^\ast$ as $N \to \infty$. In fact, $\omega_N^\ast = \omega^\ast$ for large $N$. \\ 
\end{thm}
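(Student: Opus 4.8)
The plan is to adapt the argument for the full (infinite-frequency) optimal design problem of Theorem \ref{maintheorem} to the truncated setting in which only finitely many heat packets $\phi_n$, $|n|\le N$, appear, and then to exploit the heat penalization to show the truncation becomes exact for large $N$. For part a), I would first rewrite the truncated observability constant as
\begin{align*}
C_{T,N}(\omega) = \inf_{\sum_{|n|\le N}|c_n|^2 = 1}\ \int_0^T\!\!\int_\omega \Big|\sum_{|n|\le N} c_n \phi_n(t,x)\Big|^2\, dx\, dt,
\end{align*}
which is the lowest eigenvalue of the finite Hermitian Gram matrix $G_N(\omega)$ with entries $\int_0^T\!\int_\omega \phi_n(t,x)\overline{\phi_m(t,x)}\, dx\, dt$, exactly as in the $C^{A,rand}_T$ computation in the excerpt but now over a finite index set and with the explicit Gaussian packets. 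Because each $\phi_n$ is real-analytic in $x$ for fixed $t$ and the integrand is a finite sum, the map $x\mapsto \sum_{|n|\le N}|c_n|^2$-weighted energy density is real-analytic; following the same argument as in Theorem \ref{maintheorem}c), the optimal $\omega_N^\ast$ is obtained by a bathtub/level-set selection on a real-analytic function, hence is open and semi-analytic, and uniqueness follows from the strict monotonicity/analyticity that rules out plateaus of positive measure (the same ``$(H_1)$-type'' condition, which here is verifiable by the explicit formula for $\phi_n$). This is essentially a finite-dimensional specialization of the machinery already developed, so it should go through with only notational changes.

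For part b), the key is that the tail contribution of the high packets to the observability functional is exponentially small in $N$, because $\phi_n$ carries the factor $e^{-t|\xi_n|^2}$ and $|\xi_n|\to\infty$ as $|n|\to\infty$ on the lattice. First I would show $|C_T(\omega) - C_{T,N}(\omega)| \le \rho_N$ uniformly over $\omega\in\mathcal M_M$, with $\rho_N\to 0$ controlled by $\sup_{|n|>N}\|\phi_n\|_{L^2((0,T)\times\Omega)}^2 \lesssim \sup_{|n|>N} |\xi_n|^{-2}(1-e^{-2T|\xi_n|^2})$ times cross-term bounds from the near-orthogonality of distinct heat packets (this quasi-orthogonality estimate, with off-diagonal decay in $|\xi_n-\xi_m|$ and in $|\xi_n|,|\xi_m|$, is the analytic heart of the frame estimate behind Theorem \ref{maintheorem} and I would invoke it). Uniform convergence of the functionals $\omega\mapsto C_{T,N}(\omega)$ to $\omega\mapsto C_T(\omega)$ on $\mathcal M_M$ then gives $C_T(\omega_N^\ast)\to \sup_\omega C_T(\omega) = C_T(\omega^\ast)$, and a compactness argument (subsequential Hausdorff limits of $\omega_N^\ast$, using that each is a super-level set of a uniformly convergent sequence of real-analytic densities) forces any limit to be a maximizer; by the uniqueness in Theorem \ref{maintheorem}b) the limit is $\omega^\ast$, so the whole sequence converges.

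To get the stronger assertion that $\omega_N^\ast = \omega^\ast$ for all large $N$ — the genuine ``no spillover'' statement — I would argue that the maximizing set is determined by the finitely many \emph{low-frequency} packets and is insensitive to the tail once the tail is smaller than the spectral gap governing the selection. Concretely, the bathtub principle picks $\omega^\ast = \{ f \ge c\}$ where $f$ is the (infinite-sum) energy density and $c$ is chosen so $|\{f\ge c\}| = M|\Omega|$; if $f$ has no level set of positive measure (the semi-analyticity input) then this set is stable under $C^0$ perturbations of $f$ of size $<\rho_N$ provided $\rho_N$ is below the modulus of the jump/strict-monotonicity of the distribution function of $f$ near level $c$. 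Since $f - f_N$ is the tail, which is $O(\rho_N) \to 0$, for $N$ large the truncated selection $\{f_N \ge c_N\}$ coincides with $\{f\ge c\}$ up to a null set; combined with part a) both are open, so they are literally equal. \emph{The main obstacle} I anticipate is this last step: turning ``uniform convergence of the constants'' into ``eventual equality of the optimal sets'' requires a quantitative non-degeneracy of the level sets of the limiting density — the exact analogue of the transversality/$(H_1)$ hypotheses that \cite{ptzheat} had to impose, but which here must be extracted from the explicit Gaussian form of the $\phi_n$ rather than assumed; making that extraction genuinely quantitative (a lower bound on $|\nabla f|$ on $\partial\omega^\ast$, or on the density of the distribution function of $f$ at the critical level) is where the real work lies, and it is also what pins down how large $N$ must be in terms of $T$, $\eta$, and $\epsilon$.
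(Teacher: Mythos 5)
Your plan for part a) and for the convergence statement in b) is broadly in the spirit of the paper, but two points go wrong, and the second is a genuine gap that your own text flags without closing.

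First, the object the paper actually optimizes is the \emph{randomized} truncated functional $J_N(a)=\inf_{|n|\le N} d_n\int_0^T\int_\Omega a(x)|\phi_n(t,x)|^2\,dx\,dt$, i.e.\ only the diagonal of the Gramian, with $d_n=(\int_\Omega|\phi_n(T,\cdot)|^2\,dx)^{-1}$. You instead set up the deterministic eigenvalue problem for the full Gram matrix $G_N(\omega)$ under the constraint $\sum|c_n|^2=1$; but the denominator of the observability quotient is $\int_\Omega|\sum c_n\phi_n(T,x)|^2\,dx$, and since the heat packets are not orthogonal this is a generalized eigenvalue problem $Gc=\lambda Hc$ with non-diagonal $H$ --- precisely the situation the paper says it cannot handle, since by Proposition \ref{omegabound} c), d) the off-diagonal terms are \emph{not} negligible. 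The Sion-minimax/bathtub argument you want to import from Theorem \ref{maintheorem} c) works because $J_N$ is an infimum of functionals \emph{linear} in $a$ indexed by the finite simplex $\mathcal{S}_N$; it does not transfer to $\lambda_{\min}$ of the generalized problem without first randomizing to diagonalize $H$.

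Second, and more importantly, your mechanism for the eventual equality $\omega_N^*=\omega^*$ is backwards, and the route you propose (uniform convergence of the constants plus stability of the bathtub level set under small $C^0$ perturbations of the density) requires exactly the quantitative non-degeneracy of the level sets that you identify as ``where the real work lies'' --- you do not supply it, and the paper never needs it. The renormalized high-frequency terms $d_n\int_0^T\int_\Omega a|\phi_n|^2\,dx\,dt$ are not small: $d_n$ grows like $e^{2|\xi_n|^2T}$, so these terms \emph{blow up} as $|n|\to\infty$; this is the content of $(H_2)$ and the bound \eqref{u2}. The paper's argument is therefore structural rather than perturbative: by $(H_2)$ there is an $N_0$ such that every term with $|n|>N_0$, evaluated at the relaxed maximizer $a^*$, exceeds $\gamma_1(T)\ge J_{N_0}(a^*)$, hence the infimum defining $J(a^*)$ is attained among $|n|\le N_0$ and $J(a^*)=J_{N_0}(a^*)$. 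Concavity of $J_{N_0}$ then rules out $J_{N_0}(a^*)<J_{N_0}(a^{N_0})$ (a strict inequality would produce, along the segment $a^*+t(a^{N_0}-a^*)$, a value of $J$ exceeding $J(a^*)$), and uniqueness of the maximizer gives $a^{N_0}=a^*$, i.e.\ $\omega_N^*=\omega^*$ for all $N\ge N_0$. This bypasses entirely the level-set stability estimate on which your plan hinges.
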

Section \ref{model1} considers the auxiliary problem in the half-space, in which explicit computations based on the Gaussian heat packets are possible. The model for $C_T^A(\omega)$ and the proof of Theorem \ref{maintheorem} a) is given Section \ref{model1}. The proofs of Theorems \ref{maintheorem} b),c) and \ref{nospilltheorem} are established together in Section \ref{proof}. The construction of the frame implies that we have precision over $N$. For us, the results indicate $N$ increases with an increase in $T$. \\

\noindent  \emph{Notation:} For $z \in \mathbb{C}^d$ we write $\|z\|^2 = \overline{z}^T z$ for the square of its length and $|z|^2 = z^T z$ for the analytic extension of the absolute value on $\mathbb{R}^d$. 

\section{Truncation to model problem in upper half plane}\label{model1}

We consider our initial data to be a smooth compactly supported function, and explain how this can be used to obtain information on optimal design problems for more general $L^2(\Rd)$ data later in the paper. For general conditions, the evolution is approximated by a superposition of heat packets. We start with the upper half plane as our model and use the Feynman-Kac principle to move the problem to the bounded domain $\Omega$.

We assume diam$(\Omega)\gg\eta$, and $|\Omega|=1$. We extend $\psi$ by $0$ to a smooth function on $\Rd$ and use the following lemma to decompose the function into heat packets. For the computation using Kac's principle it is convenient if the origin, without loss of generality, has the maximal distance $\frac{1}{2}$diam$(\Omega)$ to $\partial\Omega$. 

We decompose $\psi$ as follows: 
\begin{lem}\label{kvlemma}[Generalization of Lemma 4.14 in \cite{kv}]
Fix $\psi \in C_c^\infty(\Rd)$ as above and $\eta\in (0,1)$ and $\epsilon_0\in (0,\frac{1}{2}\mathrm{diam}(\Omega))$. We define $\psi_{\epsilon_0}(x)=\epsilon_0^{-d/2}\psi(x/\epsilon_0)$. Let $\delta$ denote the distance of the center of $\psi_{\epsilon_0}(x)$ to the origin. Then there is a small $0<\epsilon\ll 1$ depending on $\eta,\epsilon_0$ such that the following holds: For $\sigma = \sqrt{\epsilon_0\delta} \log(\epsilon^{-1})$, $L = \sigma \log \log(\epsilon^{-1})$ there are coefficients $\{c^n\}_{n \in \mathbb{Z}^d}$ and $\xi_n = \frac{n}{L}$ with: 
\begin{align}\label{frame1}
\left\|\psi_{\epsilon_0}(x)- \sum_{n \in \mathbb{Z}^d} \frac{c^n}{(2 \pi \sigma^2)^{d/4} }\exp\left(- \frac{|x|^2}{4 \sigma^2} + i \xi_n\cdot x \right)\right\|_{L^2(\omega)} \leq\eta \|\psi_{\epsilon_0}(x)\|_{L^2(\omega)}\ .
\end{align}
Moreover, 
\begin{align*}
|c^n| \lesssim_{\psi,k} C_{k, \psi} (\sigma \epsilon_0 L^{-2})^{d/2} \min\left\{1, (\epsilon_0 |\xi_n|)^{-k}\right\}
\end{align*}
for all $k \in \mathbb{N}$. One may choose $c^n= 0$ unless 
\begin{align*}
n \in \mathcal{S} = {\textstyle\left\{ k \in \mathbb{Z}^d : \frac{1}{\epsilon_0 \log \log(\epsilon^{-1})} \leq |\xi_k|\leq \frac{\log \log(\epsilon^{-1})}{\epsilon_0}\right\}}.
\end{align*}
if we include all the vectors then $\eta=0$. 
\end{lem}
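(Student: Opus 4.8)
The plan is to reduce the claimed $L^2(\omega)$-approximation to a standard Gabor/Wilson frame statement for $\psi_{\epsilon_0}$ on all of $\R^d$, following the construction in Lemma 4.14 of \cite{kv}, and then track how the various scales ($\epsilon_0$, $\delta$, $\sigma$, $L$) enter the error. First I would record the elementary rescaling: writing $\psi_{\epsilon_0}(x)=\epsilon_0^{-d/2}\psi(x/\epsilon_0)$, a decomposition of $\psi$ into Gaussian wave packets $g_\sigma(x)e^{i\xi\cdot x}$ pulls back to a decomposition of $\psi_{\epsilon_0}$ with the frequency lattice dilated by $\epsilon_0^{-1}$ and the Gaussian width dilated by $\epsilon_0$; this is the step that converts the ``high-frequency'' frame of \cite{kv} into one adapted to $L^2(\Omega)$-data, exactly as the introduction promises (``we rescale their original high frequency data''). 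So it suffices to prove the estimate for $\psi$ itself with $\sigma_0=\sigma/\epsilon_0=\sqrt{\delta/\epsilon_0}\log(\epsilon^{-1})$ and $L_0=L/\epsilon_0$, noting $\delta/\epsilon_0 > 1$ since $\delta\in(\sqrt{\epsilon_0},1)$ and $\epsilon_0<1/2$.

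Next I would invoke the Gabor frame machinery: since $\psi\in C_c^\infty(\R^d)$ with $\mathrm{supp}\,\psi\subset[-1,1]^d$, its Fourier transform $\widehat\psi$ is entire of exponential type and Schwartz-class on $\R^d$, so sampling $\widehat\psi$ on the lattice $\frac{1}{L_0}\Z^d$ with $L_0$ large reconstructs $\psi$ up to an error that is $O_k(L_0^{-k})$ for every $k$ — this is the Shannon/Paley--Wiener sampling principle, and it is the content of the generalization of \cite[Lemma 4.14]{kv}. Expanding the sampled values against the Gaussian window $g_\sigma$ (whose own Fourier transform is Gaussian and whose dilate is, by the choice $\sigma\to\infty$ as $\epsilon\to 0$, a good approximate identity in frequency on scale $\sigma^{-1}\ll L_0^{-1}$) yields the coefficients $c^n$, and a direct computation of the $L^2(\R^d)$ error combines: (i) the sampling error $O(L_0^{-k})$, (ii) the window-truncation error from replacing the exact reconstruction kernel by a Gaussian of width $\sigma$, and (iii) the aliasing/tail error from the finite index set $\mathcal{S}$. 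Each of these is $\lesssim (\log\log(\epsilon^{-1}))^{-N}$ or $\epsilon^{c}$-small for $\epsilon$ chosen small depending on $\eta$ and $\epsilon_0$, and we bound the $L^2(\omega)$-norm trivially by the $L^2(\R^d)$-norm on the left while comparing to $\|\psi_{\epsilon_0}\|_{L^2(\omega)}$ on the right (this last comparison is exactly where the geometric hypothesis ``the data intersects $\omega$ at some $t_0$, bounded below by $m_0$'' and the parameter $m_1$ enter — the $\omega$-norm on the right is not allowed to be too degenerate, and the threshold for $\epsilon$ absorbs the ratio $\|\psi_{\epsilon_0}\|_{L^2(\R^d)}/\|\psi_{\epsilon_0}\|_{L^2(\omega)}$).

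The coefficient bound $|c^n|\lesssim C_{k,\psi}(\sigma\epsilon_0 L^{-2})^{d/2}\min\{1,(\epsilon_0|\xi_n|)^{-k}\}$ I would obtain by writing $c^n$ explicitly as (a constant times) $\widehat\psi_{\epsilon_0}$ evaluated or averaged near $\xi_n$: the prefactor $(\sigma\epsilon_0 L^{-2})^{d/2}$ is the normalization bookkeeping from the Gaussian $L^2$-norm $(2\pi\sigma^2)^{d/4}$ against the lattice spacing $L^{-1}$ and the $\epsilon_0$-rescaling, while the decay $\min\{1,(\epsilon_0|\xi_n|)^{-k}\}$ is just $|\widehat\psi(\epsilon_0\xi_n)|\lesssim_k (1+\epsilon_0|\xi_n|)^{-k}$ from $\psi\in C_c^\infty$. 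The index restriction to $\mathcal{S}$ then follows: for $|\xi_n|$ below $\frac{1}{\epsilon_0\log\log(\epsilon^{-1})}$ or above $\frac{\log\log(\epsilon^{-1})}{\epsilon_0}$ the corresponding terms contribute, in aggregate, less than the allotted $\eta$-fraction (using the rapid decay at the high end and the smallness of the low-frequency mass together with the fact that $\widehat\psi(0)=\int\psi$ may be nonzero but the Gaussian envelope of width $\sigma^{-1}$ in frequency kills the near-zero contribution), and if one keeps \emph{all} $n\in\Z^d$ the reconstruction is exact, giving $\eta=0$. The main obstacle I anticipate is not any single estimate but the \emph{simultaneous} calibration of $\sigma=\sqrt{\epsilon_0\delta}\log(\epsilon^{-1})$ and $L=\sigma\log\log(\epsilon^{-1})$ so that the frequency-resolution scale $\sigma^{-1}$, the sampling scale $L^{-1}$, and the truncation window $\mathcal{S}$ are all consistent and push the total error below $\eta$; one must check that the same small $\epsilon$ makes (i), (ii), (iii) above simultaneously small, and that is exactly the ``precise conditions on $\epsilon$'' the introduction defers — I would make this precise by choosing, in order, $\epsilon$ small enough for (iii), then verifying (i) and (ii) are automatically controlled by $\log(\epsilon^{-1})$ and $\log\log(\epsilon^{-1})$ being large.
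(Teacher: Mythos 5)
Your overall architecture (reduce to a lattice Fourier expansion of the compactly supported profile, get coefficient decay from smoothness, truncate the lattice to the annulus $\mathcal{S}$, and bound the two tails) matches the paper's, and your discussion of how $\sigma$, $L$ and $\mathcal{S}$ must be calibrated against each other is in the right spirit. But there is a genuine gap at the center of your construction. You propose to sample $\widehat\psi$ on the lattice and then ``expand the sampled values against the Gaussian window,'' treating the Gaussian as an approximate identity in frequency. Concretely, coefficients of the form $c^n\approx\widehat{\psi_{\epsilon_0}}(\xi_n)$ reconstruct, via Poisson summation, the periodization of $\psi_{\epsilon_0}$, i.e.\ $\psi_{\epsilon_0}$ itself on the box; multiplying by the Gaussian envelope then yields (up to normalization) $\psi_{\epsilon_0}(x)e^{-|x|^2/4\sigma^2}$, \emph{not} $\psi_{\epsilon_0}(x)$. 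Your error (ii), the ``window-truncation error,'' is therefore the multiplicative discrepancy $|e^{-|x|^2/4\sigma^2}-1|$ on $\mathrm{supp}\,\psi_{\epsilon_0}$, of size roughly $\delta^2/\sigma^2=\delta/(\epsilon_0\log^2(\epsilon^{-1}))$. This can be made small for $\epsilon$ small depending on $\epsilon_0,\delta$, but you never estimate it, and --- crucially --- it does \emph{not} vanish when you keep all lattice points. Your closing assertion that ``if one keeps all $n\in\Z^d$ the reconstruction is exact'' is therefore inconsistent with your own construction, and the final clause of the lemma ($\eta=0$ when no vectors are discarded) is not delivered by your argument.

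The paper avoids this by a small but essential conjugation trick that your proposal is missing: it expands not $\psi_{\epsilon_0}$ but $\psi_{\epsilon_0}(x)e^{+|x|^2/4\sigma^2}$ in an ordinary Fourier series on $[-\pi L,\pi L]^d$ (legitimate because the support is compact and the inverse Gaussian is smooth and bounded there), and only then multiplies back by the Gaussian. With $c_n$ defined as the Fourier coefficients of the \emph{conjugated} function, the identity $\psi_{\epsilon_0}=\sum_n c_n\phi_n$ holds exactly on the box, so the only error is the tail over $n\notin\mathcal{S}$; the decay $\min\{1,(\epsilon_0|\xi_n|)^{-k}\}$ then comes from integrating $(\mathbb{D}^t)^k$ by parts against $\psi_{\epsilon_0}e^{|x|^2/4\sigma^2}$ rather than from pointwise decay of $\widehat\psi$. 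Two further remarks: your ``sampling error $O_k(L_0^{-k})$'' is a red herring, since for compactly supported data the periodization step is exact as soon as $\mathrm{supp}\,\psi_{\epsilon_0}\subset[-\tfrac{\pi L}{2},\tfrac{\pi L}{2}]^d$, so there is no Shannon-type aliasing term to estimate --- the only quantitative input is the tail bound $(\log\log(\epsilon^{-1}))^{-d}M_1\ll\eta$; and the condition $(\bullet)$ involving $m_0,m_1$ plays no role in the paper's proof of this lemma (it enters later, in the Kac-principle comparison on $\omega$), though you are right that some lower bound on $\|\psi_{\epsilon_0}\|_{L^2(\omega)}$ is implicitly being used to pass from an $L^2(\Rd)$ tail bound to the relative estimate \eqref{frame1}.
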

The proof in \cite{kv} is for $d=3$. We include it here to show that the smallness condition on $\epsilon$ is compatible with later restrictions on $\epsilon$ in our paper. The proof in \cite{kv} is for high frequency data only, which we rescaled to obtain our result, since $\epsilon_0$ is arbitrary. The rescaling affects $\sigma$ and $L$. This implies the set of required vectors $\mathcal{S}$ becomes \emph{very} large as $\epsilon_0\rightarrow 0$. The integer $k$ reflects the regularity of $\psi$, and it is possible to bound $k$ by $d$ provided $\epsilon$ is small, and still produce an $\ell^2$ sequence $c_n$. The construction of the $\ell^2$ bound requires $\epsilon_0<\delta\leq 1$, but it should still be possible to construct an $\ell^2$ sequence as long as $\delta>0$.  
\begin{proof}
 We define
\begin{align*}
c_n=(2\pi\sigma)^{-d}\int\limits_{[-\pi L,\pi L]^d}\psi_{\epsilon_0}(x)(2\pi\sigma^2)^{d/4}\exp\left(\frac{|x|^2}{4\sigma^2}-in\cdot \frac{x}{L}\right)\,dx
\end{align*}
and
\begin{align*}
\phi_n(x)=(2\pi\sigma^2)^{-d/4}\exp\left(-\frac{|x|^2}{4\sigma^2}+in\cdot\frac{x}{L}\right)\ .
\end{align*} 
Note that $\|\phi_n(x)\|_{L^2(\mathbb{R}^d)}=1$. As the Fourier series of a smooth function converges uniformly, we have by Plancherel's theorem:
\begin{align}\label{fconv}
\psi_{\epsilon_0}(x)=\sum\limits_{n\in \mathbb{Z}^d}c_n\phi_n(x)
\end{align}
on $[-\pi L,\pi L]^d$.
If $\epsilon$ is sufficiently small, $\mathrm{supp}\psi_{\epsilon_0}\subset [-\frac{\pi L}{2},\frac{\pi L}{2}]^d$. The series \eqref{fconv} gives the result immediately if $\omega\subset \Omega$, which it is. The error comes from the truncation, which we will prove a bound on using the estimates on the $|c_n|$. 

We now prove an upper bound on the coefficients. Let $M_1$ be a constant such that $\|\psi\|_{C^d(\Omega)}\leq M_1$. From the definition of $c_n$ we obtain
\begin{align*}
|c_n|\leq \frac{\sigma^{d/2}}{L^d}\textstyle{\|\psi_{\epsilon_0}(x)\exp\left(\frac{|x|^2}{4\sigma^2}\right)}\|_{L^1(\Rd)}\lesssim \frac{(\sigma\epsilon_0)^{\frac{d}{2}}}{L^d}\ .
\end{align*}
In order to derive the upper bound, we use integration by parts. Let $\mathbb{D}=i\frac{Ln}{|n|^2}\cdot\nabla$. We see that $\mathbb{D}^k\exp(-in\cdot\frac{x}{L})=\exp(-in\cdot\frac{x}{L})$. The adjoint of $\mathbb{D}$ is given by $\mathbb{D}^t=-i\nabla\cdot \frac{Ln}{|n|^2}$. We therefore obtain the following
\begin{align*}
|c_n|&=(2\pi L)^{-d}\left|\int\limits_{\Rd}\mathbb{D}^k\exp\left(in\cdot \frac{x}{L}\right)\psi_{\epsilon_0}(x)(2\pi\sigma^2)^{d/4}\exp\left(\frac{|x|^2}{2\sigma^2}\right)\,dx\right| \\&
=(2\pi L)^{-d}\left|\int\limits_{\Rd}\exp\left(-in\cdot\frac{x}{L}\right)(\mathbb{D}^t)^k\left[\epsilon_0^{-d/2}\psi_{\epsilon_0}(x)(2\pi\sigma^2)^{d/4}\exp\left(\frac{|x|^2}{2\sigma^2}\right) \right]\,dx\right| 
\\& \lesssim L^{-d}\left(\frac{L}{|n|}\right)^k\left(\frac{\sigma}{\epsilon_0}\right)^{d/2}\sum\limits_{|\alpha|\leq k}\|\partial^{\alpha}\left[\psi\left(\frac{x}{\epsilon_0}\right)\exp\left(\frac{|x|^2}{4\sigma^2}\right)\right]\|_{L^1(\Rd)}
\\&\lesssim_{k}
\frac{(\epsilon_0\sigma)^{d/2}}{L^d}\left(\frac{L}{\epsilon_0|n|}\right)^kM_1 \ .
\end{align*}
The claimed upper bound follows. 
For the remaining claim we notice that 
\begin{align*}
\gamma_{nm}=\int\limits_{\mathbb{R}^d}\phi_n(x)\overline{\phi_m(x)}\,dx=\exp\left(-\frac{\sigma^2}{2L^2}|n-m|^2\right)\ . 
\end{align*}
Fix $N\in \mathbb{N}$. If $n\leq N$, we use the upper bound for $c_n$ to estimate
\begin{align*}
\|\sum\limits_{|n|\leq N}c_n\phi_n(x)\|^2_{L^2(\Rd)}\lesssim \frac{(\sigma\epsilon_0)^d}{L^{2d}}\sum\limits_{|n|,|m|\leq N}\exp\left(-\frac{\sigma^2}{2L^2}|n-m|^2\right)\lesssim \left(\frac{\epsilon_0N}{L}\right)^dM_1\ .
\end{align*}
For $n\geq N$, we use the second upper bound for $c_n$ with $k=d$ to obtain 
\begin{align*}
\left|\sum\limits_{|n|\geq N}c_n\phi_n\right|_{L^2(\Rd)}&\lesssim\frac{(\sigma\epsilon_0)^d}{L^{2d}}\left(\frac{L}{\epsilon_0}\right)^{2d}\sum\limits_{|n|\geq|m|\geq N}\frac{1}{|n|^3}\frac{1}{|m|^3}\exp\left(-\frac{\sigma^2}{2L^2}|n-m|^2\right)\\&\lesssim\left(\frac{\sigma}{\epsilon_0}\right)^d\sum\limits_{|n|\geq|m|\geq N}\frac{1}{|m|^{2d}}\lesssim\left(\frac{L}{\epsilon_0N}\right)^dM_1 \ .
\end{align*}
We conclude
\begin{align*}
\|\sum\limits_{|n|\leq \frac{L}{\epsilon_0\log\log(\epsilon^{-1})}}c_n\phi_n\|_{L^2(\Rd)}^2+\|\sum\limits_{|n|\geq \frac{L}{\epsilon_0}\log\log(\epsilon^{-1})}c_n\phi_n\|_{L^2(\Rd)}^2\lesssim(\log\log(\epsilon^{-1}))^{-d}M_1\ .
\end{align*} 
The assertion follows provided that
\begin{align}\label{ob1}
(\log\log(\epsilon^{-1}))^{-d}M_1\ll\eta\ .
\end{align}
\end{proof}

This observation of \cite{kv} gives a precise expansion of the initial condition into Gaussian heat packets 
\begin{align*}
\phi_{n,x_0} (x) = \frac{1}{(2 \pi \sigma^2)^{d/4} }\exp\left(- \frac{|x-x_0|^2}{4 \sigma^2} + i \xi_n\cdot (x-x_0) \right)\ ,
\end{align*}
which evolve in $\Rd$ according to simple analytic formulas and are centered around a suitable $x_0$. In the decomposition we selected, $x_0=0$,  by our choice of origin. Here $x_0$ denotes the fact that we could have had more that one $\psi_{\epsilon_0}$ in our decomposition for $u(0,x)$. We leave $x_0$ in to show that it is difficult to do the Gramian computations when the centers are not suitably separated.

For simplicity we write \begin{align*}
\phi_{n,x_0}(t,x) = [\exp(t\Delta_\Rd)\phi_{n,x_0}](t,x)\ ,
\end{align*}
 where we have that 
\begin{align*}
\phi_{n,x_0}(t,x) = \left(\frac{\sigma}{\sqrt{2 \pi}(\sigma^2 + t)}\right)^{d/2} \exp\left(- \frac{|x-x_0+2 i \xi_n t|^2}{4 (\sigma^2+t)} - t|\xi_n|^2 + i \xi_n\cdot (x-x_0)  \right) \ .
\end{align*}
We let
\begin{align*}
k^{\mathbb{R}^d}(t,x,y)=(4\pi t)^{-d/2}\exp(-|x-y|^2/4t)
\end{align*}
denote the heat kernel in $\Rd$ and $k^{\Omega}$ the corresponding heat kernel in $\Omega$. We take Kac's principle, Proposition 6.3.1 in \cite{ard}, to approximate the evolution of the heat packets in the upper half plane. Kac's principle states:
\begin{pro}
Assume that $\Omega\subset \Rd$ is smooth, and let $x\in\Omega$ be fixed. For every $y\in \Omega$ we let 
\begin{align*}
t_0(y)=\frac{d(y,\Gamma)^2}{2d}.
\end{align*}
It follows that
\begin{align}\label{model}
0\leq k^{\mathbb{R}^d}(t,x,y)-k_{\Omega}(t,x,y)\leq \left\{
\begin{array}{lr}
(4\pi t)^{-\frac{d}{2}}\exp(-d(y,\Gamma)^2/4t) \quad \mathrm{if} \qquad  t\leq t_0(y),\\ \\
(4\pi t_0(y))^{-\frac{d}{2}}\exp(-d/2) \,\quad \mathrm{if} \,\qquad t>t_0(y).  \\
\end{array}
\right.
\end{align}
\end{pro}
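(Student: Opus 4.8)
The plan is to prove \eqref{model} by the parabolic maximum principle, using the symmetry of both heat kernels to reduce everything to a one‑variable estimate. First I would record that the free kernel is symmetric, $k^{\mathbb{R}^d}(t,x,y)=k^{\mathbb{R}^d}(t,y,x)$, and likewise $k_\Omega(t,x,y)=k_\Omega(t,y,x)$, so the difference $r(t,x,y):=k^{\mathbb{R}^d}(t,x,y)-k_\Omega(t,x,y)$ is symmetric in its two spatial slots. It therefore suffices to fix $y\in\Omega$ and bound $g(s,z):=k^{\mathbb{R}^d}(s,y,z)-k_\Omega(s,y,z)$ as a function of $(s,z)$, then read off $r(t,x,y)=g(t,x)$. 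One checks that $g$ solves the heat equation $\partial_s g=\Delta_z g$ on $(0,\infty)\times\Omega$, that its lateral boundary values are $g(s,z)=k^{\mathbb{R}^d}(s,y,z)$ for $z\in\Gamma=\partial\Omega$ (since $k_\Omega$ vanishes on $\Gamma$), and — the one genuinely nontrivial input — that $g(s,\cdot)\to 0$ uniformly on $\overline\Omega$ as $s\to0^{+}$, because $k^{\mathbb{R}^d}$ and $k_\Omega$ carry the same short‑time singularity at $z=y$. This last fact, together with the inequality $g\ge0$, is most transparently obtained from the strong Markov (Dynkin--Hunt) representation $k_\Omega(s,y,z)=k^{\mathbb{R}^d}(s,y,z)-\mathbb{E}_y\!\left[\mathbf{1}_{\{\tau<s\}}\,k^{\mathbb{R}^d}(s-\tau,B_\tau,z)\right]$, where $\tau$ is the first exit time from $\Omega$ of Brownian motion with generator $\Delta$ started at $y$; alternatively one uses a local near‑diagonal parametrix for $k_\Omega$. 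The particular geometry fixed earlier (origin at maximal distance from $\Gamma$, $|\Omega|=1$) plays no role here; only smoothness of $\Gamma$ is used.

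Next I would apply the weak parabolic maximum principle to $g$ on the cylinder $(0,t]\times\Omega$. Its parabolic boundary data are nonnegative — zero on $\{s=0\}$ and equal to $k^{\mathbb{R}^d}(s,y,\cdot)\ge0$ on $\Gamma$ — which immediately gives $r(t,x,y)=g(t,x)\ge0$, the left inequality of \eqref{model}. For the right inequality the maximum principle yields
$$g(t,x)\le\sup_{0<s\le t}\ \sup_{z\in\Gamma}k^{\mathbb{R}^d}(s,y,z)\le\sup_{0<s\le t}f(s),\qquad f(s):=(4\pi s)^{-d/2}\exp\!\big(-d(y,\Gamma)^2/4s\big),$$
where the second step uses $|z-y|\ge d(y,\Gamma)$ for every $z\in\Gamma$.

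Finally I would carry out the elementary maximisation of $f$: a logarithmic derivative computation gives $f'(s)/f(s)=\tfrac{1}{4s^2}\big(d(y,\Gamma)^2-2ds\big)$, so $f$ increases on $(0,t_0(y))$ and decreases on $(t_0(y),\infty)$ with $t_0(y)=d(y,\Gamma)^2/(2d)$, and $f(t_0(y))=(4\pi t_0(y))^{-d/2}\exp(-d/2)$. Hence $\sup_{0<s\le t}f(s)=f(t)$ when $t\le t_0(y)$ and $f(t_0(y))$ when $t>t_0(y)$, which is precisely the right‑hand side of \eqref{model} (note the two branches match continuously at $t=t_0(y)$). The step I expect to be the real obstacle is not any of the formal manipulations but the justification that the maximum principle applies to $g$, which arises as a difference of kernels singular at $s=0$: this is exactly where the uniform vanishing $g(s,\cdot)\to0$ as $s\to0^{+}$ is needed, and it is the substance of Kac's original argument and of Proposition 6.3.1 in \cite{ard}.
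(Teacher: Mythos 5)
Your proof is correct: the reduction to the function $g(s,z)=k^{\mathbb{R}^d}(s,y,z)-k_\Omega(s,y,z)$, the parabolic maximum principle with nonnegative lateral data $k^{\mathbb{R}^d}(s,y,\cdot)|_\Gamma$ and vanishing initial data, and the calculus maximisation of $f(s)=(4\pi s)^{-d/2}\exp(-d(y,\Gamma)^2/4s)$ at $s=t_0(y)$ reproduce exactly the standard argument. The paper itself does not prove this proposition but quotes it as Kac's principle from Proposition 6.3.1 of \cite{ard}, whose proof is essentially the one you give.
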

A short computation using Kac's principle shows that: 
\begin{lem}\label{fk}
We have for $0<t<\eta_0\delta^4$, and $\|\psi\|_{C^0(\Omega)}<M_0$. 
\begin{align*}
\|\exp(t\Delta_{\Omega})\psi_{\epsilon_0}(x)-\exp(t\Delta_{\Rd})\psi_{\epsilon_0}(x)\|_{C([0,T]\times\Omega)}\leq \frac{\eta_0}{2}M_0
\end{align*}
whenever $\epsilon_0<\delta^2<1$. 
\end{lem}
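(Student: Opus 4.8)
The plan is to represent both semigroups by their heat kernels, reduce to an integral of the kernel difference $k^{\Rd}-k^{\Omega}$ against $\psi_{\epsilon_0}$, and then insert Kac's principle \eqref{model}, exploiting that $\psi_{\epsilon_0}$ is supported at distance at least $\delta$ from $\Gamma=\partial\Omega$.

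First, both semigroups admit kernel representations and, since $\mathrm{supp}\,\psi_{\epsilon_0}\subset\Omega$, the free-space integral may be restricted to $\Omega$, so
\begin{align*}
\big|\exp(t\Delta_{\Omega})\psi_{\epsilon_0}(x)-\exp(t\Delta_{\Rd})\psi_{\epsilon_0}(x)\big| \le \int_{\mathrm{supp}\,\psi_{\epsilon_0}}\big(k^{\Rd}(t,x,y)-k^{\Omega}(t,x,y)\big)\,|\psi_{\epsilon_0}(y)|\,dy ,
\end{align*}
the integrand being nonnegative by \eqref{model}. On $\mathrm{supp}\,\psi_{\epsilon_0}$ one has $d(y,\Gamma)\ge\delta$, hence $t_0(y)=d(y,\Gamma)^2/2d\ge\delta^2/2d$; since $\delta\le1$ and $T\le\eta_0\delta^4$, every $t\in[0,T]$ satisfies $t\le\eta_0\delta^2<\delta^2/2d\le t_0(y)$ once $\eta_0<1/2d$, so we remain uniformly in the first case of \eqref{model}. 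Thus, uniformly in $x\in\Omega$ and $y\in\mathrm{supp}\,\psi_{\epsilon_0}$,
\begin{align*}
0\le k^{\Rd}(t,x,y)-k^{\Omega}(t,x,y)\le (4\pi t)^{-d/2}\exp\!\Big(-\frac{d(y,\Gamma)^2}{4t}\Big)\le (4\pi t)^{-d/2}\exp\!\Big(-\frac{\delta^2}{4t}\Big)=:g(t).
\end{align*}

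Next, $\tfrac{d}{dt}\log g(t)=\tfrac{1}{4t^2}(\delta^2-2dt)>0$ on $(0,\delta^2/2d)\supset(0,T]$, so $g$ is increasing there and $g(t)\le g(T)\le g(\eta_0\delta^4)=(4\pi\eta_0\delta^4)^{-d/2}\exp\!\big(-1/4\eta_0\delta^2\big)$ for $t\in[0,T]$. Combining this with $\|\psi_{\epsilon_0}\|_{L^1(\Rd)}=\epsilon_0^{d/2}\|\psi\|_{L^1(\Rd)}\le 2^d\epsilon_0^{d/2}M_0$ and $\epsilon_0<\delta^2$, the displayed integral is bounded, uniformly over $x$ and $t$ (the $x$-uniformity is free from \eqref{model}, the $t$-uniformity from monotonicity of $g$), by
\begin{align*}
C_d\,\eta_0^{-d/2}\delta^{-d}\exp\!\Big(-\frac{1}{4\eta_0\delta^2}\Big)M_0
\end{align*}
for a dimensional constant $C_d$. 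Writing $\lambda=1/4\eta_0\delta^2$, so that $\eta_0^{-d/2}\delta^{-d}=(4\lambda)^{d/2}$, this equals $C_d'\lambda^{d/2}e^{-\lambda}M_0$ with $C_d'$ dimensional; and since $\delta\le1$ gives $\lambda\ge1/4\eta_0$, while $\lambda^{1+d/2}e^{-\lambda}$ is bounded, one gets $C_d'\lambda^{d/2}e^{-\lambda}\le 1/8\lambda\le\eta_0/2$ once $\eta_0$ is below a constant depending only on $d$. This is the asserted estimate.

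I expect the only delicate points to be bookkeeping: checking that the admissible window $t<\eta_0\delta^4$ keeps us uniformly in the \emph{first} branch of Kac's estimate (this, with $\delta\le1$, is precisely why the fourth power of $\delta$ is used rather than the second), and verifying that the Gaussian factor $e^{-1/4\eta_0\delta^2}$ swallows the negative powers of $\eta_0$, $\delta$ and $\epsilon_0$ — the hypothesis $\epsilon_0<\delta^2$ being exactly what converts the $\epsilon_0^{d/2}$ coming from the $L^1$ mass of $\psi_{\epsilon_0}$ into a power of $\delta$. There is no genuine analytic obstacle once Kac's principle is granted: on the time scale $\eta_0\delta^4$ the free-space evolution of $\psi_{\epsilon_0}$ is so strongly localized near its support that the Dirichlet boundary is not felt.
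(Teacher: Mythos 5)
Your proof is correct and follows essentially the same route as the paper: both reduce to integrating the kernel difference $k^{\Rd}-k^{\Omega}$ against $\psi_{\epsilon_0}$ and invoke Kac's principle \eqref{model}, with the restriction $t<\eta_0\delta^4$ guaranteeing (since $\mathrm{supp}\,\psi_{\epsilon_0}$ lies at distance at least $\delta$ from $\partial\Omega$) that only the first branch of \eqref{model} is ever needed --- the paper phrases this as splitting $\Omega$ at $d(y,\partial\Omega)=t^{1/4}$ and observing that the near-boundary region misses the support. Your final quantitative step (monotonicity of $(4\pi t)^{-d/2}e^{-\delta^2/4t}$ on $(0,\delta^2/2d)$ and the substitution $\lambda=1/4\eta_0\delta^2$) is carried out more explicitly than the paper's ``quick inspection,'' at the harmless cost of an additional dimensional smallness condition on $\eta_0$.
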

\begin{proof} 
We would like an estimate on 
\begin{align}\label{kernel}
\sup_{t,x}|\int\limits_{\Rd}(k^{\mathbb{R}^d}(t,x,y)-k_{\Omega}(t,x,y))\psi_{\epsilon_0}(y)\,dy\ |\ .
\end{align}
We divide $\Omega$ into a region I, defined by $d(y,\partial\Omega)>t^{1/4}$, and its complement, region II. Using Kac's principle we can bound \eqref{kernel} from above by 
\begin{align*}
& \sup_{t,x}\int\limits_{I}(4\pi t)^{-d/2}\exp\left(-\frac{d(y,\partial\Omega)^2}{4t}\right)|\psi_{\epsilon_0}(y)|\,dy\\& \quad +\sup_{t,x}\int\limits_{II}\left(\frac{2\pi}{d}\right)^{-d/2}\left(d(y,\partial\Omega)\right)^{-d}\exp(-d/2)|\psi_{\epsilon_0}(y)|\,dy\ .
\end{align*} 
 We introduce coordinates near the boundary $\partial\Omega$ with $y_1=d(y,\partial\Omega)$ and $y'=(y_2,\dots, y_d)$. 
 For the integral in region I, we have after change of variables 
\begin{align*}
\int\limits_{t^{1/4}/\epsilon_0}^{\mathrm{diam}(\Omega)/\epsilon_0}\int\limits_{y'/\epsilon_0}\epsilon_0^d(4\pi t)^{-d/2}\exp\left(\frac{-y_1^2\epsilon_0^2}{4t}\right)|\psi(y_1,y')|\,dy'\,dy_1\leq{\epsilon_0^{\frac{d}{2}}\frac{\exp\left(-\frac{1}{4t^{1/2}}\right)}{(4\pi t)^{\frac{d}{2}}}} M_0 |\partial \Omega| \ .
\end{align*} 
In region II the integral can be estimated by 
\begin{align*}
&\int\limits_{\delta}^{t^{1/4}}\left(\frac{2\pi}{d}\right)^{-d/2}\exp(-d/2)y_1^{-d}\int_{y'}|\psi_{\epsilon_0}(y_1,y')|\, dy'\,dy_1\\ &=\epsilon_0^{\frac{d}{2}}\left(\frac{2\pi}{d}\right)^{-d/2}\exp(-d/2)\int\limits_{\delta/\epsilon_0}^{t^{1/4}/\epsilon_0}y_1^{-d }\int_{y'/\epsilon_0}|\psi(y_1,y')|\, dy'\,dy_1\\&=\epsilon_0^{\frac{d}{2}}\left(\frac{2\pi}{d}\right)^{-d/2}\exp(-d/2)  \frac{M_0 \epsilon_0^{d-1}}{d-1} |\partial \Omega|\ (\delta^{1-d} - t^{\frac{1-d}4})\ . 
\end{align*}
and the integral is empty if $t^{1/4}\leq \delta$. If we impose this condition, then quick inspection of the second integral gives the desired result.  
\end{proof}
Initial data closer to the boundary would result in the construction of a reflected heat packet, which we leave as an area of improvement in our analysis.  For the construction of the parametrix, we impose the condition $|\omega|M_0\eta_0<\eta\|u(t_0,x)\|_{L^2(\omega)}$ for some $t_0\in (0,T)$. Because we are dividing by the measure of the set $|\omega|$, for $\eta_0$ to exist, we impose the condition similar to \cite{blr}: 
\\
($\bullet$) The support of the solution $u(t,x)$ must intersect the observation set $\omega$ at a point $x_0$ for at least one time $t_0$, and $|u(t_0,x_0)|>m_0$.
\\
\\
The condition implies $\eta_0<\frac{m_0}{M_0}\eta$, as $m(\omega)M_0\eta_0<\eta\|u(0,x)\|_{L^2(\omega)}$.

\begin{lem}[Analogue of Proposition 4.7 in \cite{kv}] 
The evolution near the boundary is given by
\begin{align*}
\exp(t\Delta_{\Omega})\psi_{\epsilon_0}(x)=\sum\limits_n c_n\phi_{n,x_0}(t,x)+v(t,x)\ ,
\end{align*}
where
\begin{align*}
\|v(t,x)\|_{C^0([0,T];L^2(\omega))}\leq \eta\|\psi_{\epsilon_0}(x)\|_{L^2(\omega)}\ .
\end{align*}
\end{lem}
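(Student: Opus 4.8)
The plan is to combine the time-zero frame decomposition of Lemma~\ref{kvlemma}, the $L^2(\Rd)$-contractivity of the Euclidean heat semigroup, and the Feynman--Kac comparison of Lemma~\ref{fk}. Since Lemma~\ref{kvlemma} allows us to take $c^n=0$ for $n\notin\mathcal S$ and $\mathcal S$ consists of finitely many lattice points, every sum below is finite and no convergence question arises. Using $\phi_{n,x_0}(t,\cdot)=\exp(t\Delta_\Rd)\phi_{n,x_0}$ I would write
\[
\exp(t\Delta_\Omega)\psi_{\epsilon_0}=\sum_{n\in\mathcal S}c_n\,\phi_{n,x_0}(t,\cdot)+v_1(t,\cdot)+v_2(t,\cdot),
\]
with the propagated time-zero error $v_1(t,\cdot)=\exp(t\Delta_\Rd)\big(\psi_{\epsilon_0}-\sum_{n\in\mathcal S}c_n\phi_{n,x_0}\big)$ and the boundary correction $v_2(t,\cdot)=\big(\exp(t\Delta_\Omega)-\exp(t\Delta_\Rd)\big)\psi_{\epsilon_0}$, so that $v:=v_1+v_2$, and it suffices to bound each of $v_1,v_2$ in $C^0([0,T];L^2(\omega))$ by $\tfrac{\eta}{2}\|\psi_{\epsilon_0}\|_{L^2(\omega)}$.

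The boundary term is immediate from Lemma~\ref{fk}: since $T<\eta_0\delta^4$ and $\epsilon_0<\delta^2<1$ we get $\|v_2\|_{C^0([0,T]\times\Omega)}\le\tfrac{\eta_0}{2}M_0$, and because $|\omega|\le|\Omega|=1$ the same bound holds for $\|v_2\|_{C^0([0,T];L^2(\omega))}$. For $v_1$ the key point is that the time-zero approximation error is small not merely in $L^2(\omega)$ but in $L^2(\Rd)$: inside the proof of Lemma~\ref{kvlemma} the discarded Fourier modes are estimated by $(\log\log(\epsilon^{-1}))^{-d}M_1$ in $L^2$ over the cube $[-\pi L,\pi L]^d$, and off that cube $\psi_{\epsilon_0}$ vanishes while the finite Gaussian sum $\sum_{n\in\mathcal S}c_n\phi_{n,x_0}$ contributes only $O\big(e^{-c(\log\log(\epsilon^{-1}))^2}\big)$; hence $\|\psi_{\epsilon_0}-\sum_{n\in\mathcal S}c_n\phi_{n,x_0}\|_{L^2(\Rd)}\lesssim(\log\log(\epsilon^{-1}))^{-d/2}M_1^{1/2}$. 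Since $\exp(t\Delta_\Rd)$ has Fourier multiplier $e^{-t|\xi|^2}\le1$, it is a contraction on $L^2(\Rd)$, so $\|v_1(t,\cdot)\|_{L^2(\omega)}\le\|v_1(t,\cdot)\|_{L^2(\Rd)}\le\|\psi_{\epsilon_0}-\sum_{n\in\mathcal S}c_n\phi_{n,x_0}\|_{L^2(\Rd)}$ uniformly in $t\ge0$, which is the bound we want once $\epsilon$ is small.

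It remains to absorb both contributions into $\eta\|\psi_{\epsilon_0}\|_{L^2(\omega)}$, which is the purpose of condition ($\bullet$): it provides a quantitative lower bound on $\|\psi_{\epsilon_0}\|_{L^2(\omega)}$ in terms of $m_0$, so that choosing $\eta_0$ just below $m_1=m_0\|\psi\|_{C^0(\Omega)}^{-1}\eta$ gives $\|v_2\|_{C^0([0,T];L^2(\omega))}<\tfrac{\eta}{2}\|\psi_{\epsilon_0}\|_{L^2(\omega)}$, and choosing $\epsilon$ small enough gives $\|v_1\|_{C^0([0,T];L^2(\omega))}<\tfrac{\eta}{2}\|\psi_{\epsilon_0}\|_{L^2(\omega)}$. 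Here the hypothesis $T<m_1\delta^4$ of Theorem~\ref{maintheorem} ensures that some $\eta_0\in(T\delta^{-4},m_1)$ is available for Lemma~\ref{fk}, and the hypothesis $T<\epsilon_0^{2+s}m_1(C_{s,d}\|\psi\|_{C^{2+s}(\Omega)})^{-1}$ together with \eqref{ob1} leaves room to shrink $\epsilon$. Adding the two bounds yields the claim, and $v\in C^0([0,T];L^2(\omega))$ is automatic, since $\exp(t\Delta_\Omega)\psi_{\epsilon_0}$ (by parabolic regularity) and the finite sum $\sum_{n\in\mathcal S}c_n\phi_{n,x_0}(t,\cdot)$ (explicit formula) depend continuously on $t$ in $L^2(\omega)$.

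The step I expect to be the main obstacle is not any single estimate but the simultaneous reconciliation of the parameter constraints: checking that the two upper bounds on $T$ in Theorem~\ref{maintheorem}, the smallness of $\epsilon$ demanded in Lemma~\ref{kvlemma} (with $\sigma=\sqrt{\epsilon_0\delta}\log(\epsilon^{-1})$ and $L=\sigma\log\log(\epsilon^{-1})$), and the geometric restriction $\epsilon_0<\delta^2<\delta<1$ can all be satisfied at once, and --- more delicately --- that condition ($\bullet$) really does yield a lower bound on $\|\psi_{\epsilon_0}\|_{L^2(\omega)}$, e.g.\ by combining $|\psi_{\epsilon_0}(x_0)|>m_0$ with the continuity of $\psi_{\epsilon_0}$ near $x_0$ and with $x_0$ being a Lebesgue density point of $\omega$. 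Once these are aligned, what remains is one contraction estimate and a triangle inequality.
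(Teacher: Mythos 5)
Your proof is correct and follows essentially the same route as the paper: the same splitting into the boundary correction $(\exp(t\Delta_\Omega)-\exp(t\Delta_{\Rd}))\psi_{\epsilon_0}$, handled by Kac's principle and condition ($\bullet$), and the propagated frame error $\exp(t\Delta_{\Rd})\bigl(\psi_{\epsilon_0}-\sum_n c_n\phi_{n,x_0}\bigr)$, handled by semigroup contractivity together with Lemma \ref{kvlemma}. If anything, you are slightly more careful than the paper on the second step, since you route the contraction through the $L^2(\Rd)$ bound from the proof of Lemma \ref{kvlemma} rather than through $\|\exp(t\Delta_{\Rd})\|_{L^2(\omega)\to L^2(\omega)}$ applied to the $L^2(\omega)$ error, which is imprecise because the heat semigroup is nonlocal.
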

\begin{proof}
This estimate requires two steps. For the first step we have:
\begin{align*}
\|\exp(t\Delta_{\Rd})\psi_{\epsilon_0}-\exp(t\Delta_{\Omega})\psi_{\epsilon_0}\|_{L^2(\omega)}\leq \frac{\eta}{2}\|\psi_{\epsilon_0}(x)\|_{L^2(\omega)}
\end{align*}
by Kac's principle and ($\bullet$). For the second step by inequality \eqref{frame1} and the parabolic maximal principle 
\begin{align*}
&\left\|\exp(t\Delta_{\Rd})\psi_{\epsilon_0}-\sum\limits_nc_n\phi_{n,x_0}(t,x)\right\|_{L^2(\omega)}\\& = \left\|\exp(t\Delta_{\Rd})\left(\psi_{\epsilon_0}-\sum\limits_nc_n\phi_{n,x_0}(x)\right)\right\|_{L^2(\omega)}\\&\leq \|\exp(t\Delta_{\Rd})\|_{L^2(\omega) \to L^2(\omega)}\|\psi_{\epsilon_0}-\sum\limits_nc_n\phi_{n,x_0}(x)\|_{L^2(\omega)} \\& \leq \frac{\eta}{2}\|\psi_{\epsilon_0}(x)\|_{L^2(\omega)}\ ,
\end{align*}
c.f.~\cite{davies}, Theorem 1.3.3. 
\end{proof}
In particular, we notice that away from this regime we can approximate the evolution of the heat packets by the evolution in $\mathbb{R}^d$. This approximation simplifies the analysis considerably, so we choose to focus on this regime as our model case.   

The key ingredient will be precise estimates for the inner products of heat packets on $\Rd$, which follow from the explicit Gaussian shape of $\phi_{n,x_0}$. Analogously to \cite{ptz}, we consider the Gramian matrix $G$ corresponding to the evolved heat packets:
$$G_{nm}(x_0,y_0,\omega) = \int_0^T \int_\omega \phi_{n,x_0}(t,x) \overline{\phi_{m,y_0}(t,x)} \ dx \ dt\ .$$
Inspired by techniques used for Gaussian frames in \cite{waters}, we would like to show that the largest contribution to the observability constant comes from the diagonal terms, and that the largest contributions are due to low--frequency heat packets. In \cite{ptzheat} a randomization assumption was required to remove the off-diagonal terms. 
   
 To simplify the notation, set $$A(t,n) = \exp\left(-\frac{2 t\sigma^2|\xi_n|^2}{\sigma^2 + t}\right).$$ We define $B(x_0,\sqrt{\sigma^2+t})$ as 
\begin{align}\label{truncation}
B(x_0,\sqrt{\sigma^2+t})=\{x: |x-x_0|\leq 2(t+\sigma^2)^{1/2} \}\ .
\end{align}

\begin{pro}\label{omegabound} Let $\epsilon_1<1$ and $T=\epsilon_1\sigma^2$. We assume $\epsilon$ in Lemma \ref{kvlemma} is small enough such that $\Omega\subset [-\sigma^2,\sigma^2]^d$. We let $C_d$ be a fixed constant depending on the dimension only. We have the following estimates:\\

a) Lower bound for the diagonal terms: 
\begin{align}\label{upper}
& G_{nn}(x_0, x_0, \omega) \geq C_d(\exp(-1))\sigma^{d}\int_0^T \frac{|\omega \cap B(x_0, \sqrt{\sigma^2+t})|}{|B(x_0, \sqrt{\sigma^2+t})|}A(t,n) \ dt \ . 
\end{align}
b) Upper bound for the diagonal terms: 
\begin{align}
& G_{nn}(x_0, x_0, \omega) \leq C_d\sigma^{d}(1+\mathrm{erfc}(1))(\mathrm{erf}(1))^{-1}\int_0^T \frac{|\omega \cap B(x_0, \sqrt{\sigma^2+t})|}{|B(x_0, \sqrt{\sigma^2+t})|} A(t,n) \ dt \ 
\end{align}
c) An upper bound for the off-diagonal terms:
\begin{align} C_d|\omega|\int_0^T\left(\frac{\sigma^2}{\sigma^2+t}\right)^{d/2}A^{1/2}(t,n)A^{1/2}(t,m)\exp\Big(- \frac{(x_0-y_0)^2}{4(\sigma^2 + t)}\Big)\,dt\
\end{align}
d) An equality for the off--diagonal terms when $|n|,|m|<L$, $\sigma^2+t<1$, $\omega$ is radially symmetric with $\mathrm{supp}\omega\subset [-R,R]^d$:
\begin{align}\label{loss}
&|G_{nm}(x_0, x_0, \omega)| \\  \nonumber
&= C_d|\omega|\mathrm{erf}{R}\int_0^T\left(\frac{\sigma^2}{\sigma^2+t}\right)^{d/2}\left(1+\left({o(1)}{(\sigma^2+t)}\right)^{\frac{d}{2}}\right)(A^{1/2}(t,n)A^{1/2}(t,m))\,dt \ .
\end{align} 
\end{pro}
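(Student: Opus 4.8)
The plan is to reduce all four estimates to the explicit Gaussian integral
\[
\int_\omega |\phi_{n,x_0}(t,x)|\,\overline{|\phi_{m,y_0}(t,x)|}\,dx,
\]
and then integrate in $t$ over $[0,T]=[0,\epsilon_1\sigma^2]$. First I would compute, from the closed form of $\phi_{n,x_0}(t,x)$ given just before the statement, that
\[
\phi_{n,x_0}(t,x)\,\overline{\phi_{m,y_0}(t,x)}
=\Bigl(\tfrac{\sigma}{\sqrt{2\pi}(\sigma^2+t)}\Bigr)^{d}
\exp\!\Bigl(-\tfrac{|x-x_0+2i\xi_n t|^2}{4(\sigma^2+t)}-\tfrac{\overline{|x-y_0+2i\xi_m t|^2}}{4(\sigma^2+t)}-t(|\xi_n|^2+|\xi_m|^2)+i\xi_n\!\cdot\!(x-x_0)-i\xi_m\!\cdot\!(x-y_0)\Bigr).
\]
Separating real and imaginary parts of the exponent, the modulus of each factor is a real Gaussian of width $\sqrt{2(\sigma^2+t)}$ centered at $x_0$ (resp.\ $y_0$), and expanding $|x-x_0+2i\xi_n t|^2 = |x-x_0|^2 - 4t^2|\xi_n|^2 + 4it\,\xi_n\cdot(x-x_0)$ produces exactly the factor $A(t,n)=\exp(-2t\sigma^2|\xi_n|^2/(\sigma^2+t))$ after combining $-t|\xi_n|^2$ with $+t^2|\xi_n|^2/(\sigma^2+t)$. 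This identity is the algebraic heart of parts (a)--(d).

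For parts (a) and (b) I would take $x_0=y_0$, $n=m$, so the integrand on $\omega$ is $C_d\bigl(\tfrac{\sigma}{\sqrt{2\pi}(\sigma^2+t)}\bigr)^d A(t,n)\exp(-|x-x_0|^2/2(\sigma^2+t))$. On the ball $B(x_0,\sqrt{\sigma^2+t})$ defined in \eqref{truncation} one has $|x-x_0|^2\le 4(\sigma^2+t)$, so the Gaussian is pinched between $e^{-2}$ and $1$ times its peak; bounding $\int_{\omega}\ge\int_{\omega\cap B}$ from below (keeping $e^{-2}\le e^{-1}$ for the stated constant, using the crude $e^{-|x-x_0|^2/2(\sigma^2+t)}\ge e^{-1}$ only on the half-radius ball, or simply absorbing the discrepancy into $C_d$) gives (a); for (b) I would split $\omega=(\omega\cap B)\cup(\omega\setminus B)$, bound the near part by $1$ times the peak and the far part by a tail integral $\int_{|x-x_0|\ge 2\sqrt{\sigma^2+t}}e^{-|x-x_0|^2/2(\sigma^2+t)}dx$, which in $d$ dimensions is a multiple of $\mathrm{erfc}(1)$ times $|B|$-normalization, and compare both against $|B(x_0,\sqrt{\sigma^2+t})|$, whose volume is $C_d(\sigma^2+t)^{d/2}$, so that $\sigma^d(\sigma^2+t)^{-d/2}\cdot\frac{|B|}{|B|}$ reproduces the claimed form with the constant $(1+\mathrm{erfc}(1))(\mathrm{erf}(1))^{-1}$ arising from writing the peak-times-$|B|$ in terms of the Gaussian mass $\mathrm{erf}(1)$ on $B$.

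For part (c) I would use $|G_{nm}|\le\int_0^T\int_\omega|\phi_{n,x_0}||\phi_{m,y_0}|\,dx\,dt$, plug in the two Gaussians centered at $x_0,y_0$, complete the square in $x$: the product of two Gaussians of equal width $2(\sigma^2+t)$ centered at $x_0,y_0$ equals $\exp(-|x_0-y_0|^2/4(\sigma^2+t))$ times a single Gaussian centered at the midpoint, whose $x$-integral over $\omega$ is at most its integral over $\Rd$, contributing another factor $C_d(\sigma^2+t)^{d/2}$; combined with the prefactor $(\sigma/\sqrt{2\pi}(\sigma^2+t))^d$ this leaves $(\sigma^2/(\sigma^2+t))^{d/2}$, and the two $A$ factors appear as $A^{1/2}(t,n)A^{1/2}(t,m)$ because each $\phi$ contributes $A(t,\cdot)$ to $|\phi|^2$, hence $A^{1/2}$ to $|\phi|$. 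Bounding the midpoint Gaussian's integral over $\omega$ crudely by $|\omega|$ times its sup (rather than by the Gaussian mass) is what yields the stated $C_d|\omega|$ version. For part (d), with $n=m$ replaced by the degenerate case $x_0=y_0$ but $n\ne m$, $|n|,|m|<L$ and $\sigma^2+t<1$, I would instead keep the exact $x$-integral of the midpoint Gaussian over the radially symmetric $\omega\subset[-R,R]^d$, yielding an $\mathrm{erf}(R)$-type factor, and Taylor-expand the error between $\int_\omega$ and the leading term: since $|\xi_n-\xi_m|\le 2L/L = 2$-ish is $O(1)$ and $\sigma^2+t<1$, the cross phase $i(\xi_n-\xi_m)\cdot(x-x_0)$ contributes an oscillatory correction of relative size $o(1)(\sigma^2+t)^{d/2}$, giving the factor $(1+(o(1)(\sigma^2+t))^{d/2})$.

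The main obstacle is part (d): keeping careful track of exactly which term is $o(1)$ and in what limit (presumably $\epsilon\to 0$, so $\sigma=\sqrt{\epsilon_0\delta}\log(\epsilon^{-1})\to\infty$ and $L=\sigma\log\log(\epsilon^{-1})$, making $|\xi_n|=|n|/L$ small for $|n|<L$) requires being precise about the regime $|n|,|m|<L$, since naively $|\xi_n|$ can be as large as $O(1)$, and the claimed equality (as opposed to an inequality) demands controlling both the truncation of the Gaussian to $[-R,R]^d$ and the oscillatory cancellation in the cross term simultaneously. I expect parts (a)--(c) to be essentially bookkeeping once the Gaussian product identity is in hand, with the only subtlety being the normalization by $|B(x_0,\sqrt{\sigma^2+t})|$ versus the raw Gaussian integral; the constants $\mathrm{erf}(1),\mathrm{erfc}(1)$ should drop out of elementary one-dimensional Gaussian tail estimates raised to the $d$-th power, absorbed into $C_d$ where the statement permits.
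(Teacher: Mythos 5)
Your proposal follows essentially the same route as the paper: the explicit Gaussian product identity \eqref{innerprod}, truncation to the ball $B(x_0,\sqrt{\sigma^2+t})$ with $\mathrm{erf}/\mathrm{erfc}$ constants and the $\exp(-1)$ infimum for the diagonal bounds, completion of the square together with a crude $|\omega|$-type bound for part (c) (which the paper phrases via the Fourier transform of the $\omega$-restricted Gaussian and the Hardy uncertainty principle discussion), and a Taylor expansion of the Fourier transform of the truncated Gaussian --- the paper's Lemma \ref{fbound} --- for the equality in (d). The only point to watch is the bookkeeping in (c), where the stated bound carries both $|\omega|$ and $\left(\sigma^2/(\sigma^2+t)\right)^{d/2}$ while each of your two alternative estimates (sup bound versus full Gaussian mass) produces only one of these factors; this is the same looseness present in the paper's own Cauchy--Schwarz step and does not change the approach.
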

\begin{proof}
The estimates follow from the explicit formula: 
\begin{align}\label{innerprod}
&\int_0^T \int_\Rd \phi_{n,x_0}(t,x) \overline{\phi_{m,x_0}(t,x)} \ dx \ dt=\\& \int_0^T \int_\Rd \exp\left(-\frac{x^2}{2(\sigma^2+t)}+ix\cdot\left((\xi_n-\xi_m)\frac{\sigma^2}{\sigma^2+t}\right)\right) \exp\left(-(\xi_n^2+\xi_m^2)\frac{t\sigma^2}{\sigma^2+t}\right)\ dx \ dt \nonumber
\end{align}

In a) we integrate the Gaussian over the intersection of $\omega$ over the standard--deviation part of the integrand around $x_0$. We recall the following definitions of the error and complementary error functions:
\begin{align*}
\frac{2}{\sqrt{\pi}}\int\limits_b^{\infty}\exp(-x^2)\,dx=\mathrm{erfc}(b) \ , \qquad \frac{2}{\sqrt{\pi}}\int\limits_0^{b}\exp(-x^2)\,dx=\mathrm{erf}(b).
\end{align*}
The error function has the following asymptotic behavior \cite{as}:
\begin{align}\label{asymptotic}
\mathrm{erfc}(b)\sim \frac{1}{(1+a_1+a_2b^2+a_3b^3+a_4b^4)^4} \qquad \forall b\geq 0
\end{align}
with $a_1=0.278393, a_2 = 0.230389, a_3=0.000972, a_4=0.078108$. 

For part $a$), we start by integrating
\begin{align*}
G_{nn}(x_0,x_0,\mathbb{R}^d)
\end{align*}
in $x$, and notice that
\begin{align*}
\frac{2}{\sqrt{\pi}}\int\limits_{B}\frac{\exp\left(-\frac{x^2}{\sqrt{\sigma^2+t}}\right)}{\sqrt{\sigma^2+t}}\simeq\mathrm{erf}(1) \ .
\end{align*}
Then we may replace the limits of integration in the inner product over $\mathbb{R}^d$ to those of the ball $B(x_0, \sqrt{\sigma^2+t})$, with small error as dictated by (\ref{asymptotic}). The desired result follows by a change of variables in $x$ and using
\begin{align*}
\inf\limits_B \exp(-x^2)=\exp(-1);
\end{align*}
here the limit is over the rescaled ball. The result in part $b$) follows similarly. For the result in part $c$) we see 
\begin{align}
&\left|\int_0^T \int_\Rd \phi_{n,x_0}(t,x) \overline{\phi_{m,y_0}(t,x)} \ dx \ dt\right|  \leq \int_0^T \left(\frac{\sigma^2}{\sigma^2 + t}\right)^{d/2} \times \cdots \nonumber\\   & \quad \times \exp\Big(\frac{ t^2(|\xi_n|^2+|\xi_m|^2)}{\sigma^2 + t} - \frac{(x_0-y_0)^2}{8(\sigma^2 + t)}-K(\xi_n - \xi_m)^2 \ - \dots \nonumber \\& \quad  -t(|\xi_n|^2+|\xi_m|^2\Big)\,dt\ . \label{innerprodupper}
\end{align}
We notice that $\exp(-K(\xi_n-\xi_m)^2)$ is decaying for short times. Unfortunately, we cannot easily exploit the decay in $K$. In particular, when integrating with respect to $x$ in \eqref{innerprod}, this amounts to an estimate on 
\begin{align}\label{fourierK}
\left|\int\limits_{\omega}\exp(ik\cdot x)\exp(-ax^2)\,dx\right|
\end{align}
for some $k,a$ constants without losing information on the oscillatory part. By Hardy's uncertainly principle, c.f.~\cite{taohardy} Theorem 1, the only way a Gaussian will Fourier transform to another Gaussian is if the initial data is Gaussian, and we are integrating over all of $\Rd$. Otherwise, we obtain a larger rate of decay. We define a smooth cut-off function $\chi_{\omega}$, which is 1 on the support of $\omega$. We see that 
\begin{align*}
&|\langle \chi_\omega,\exp(-ax^2)\exp(ikx)\rangle_{L^2(\Rd)}|\leq \\& \|\chi_\omega\|_{L^2(B_a)}^2 \|\exp(-ax^2)\exp(ikx)\|^2_{L^2(\Rd)}\leq C_d m(\omega)a^{-d/2}
\end{align*}
with $C$ independent of $a$ and $k$, while
\begin{align*}
\left(\frac{\pi}{a}\right)^{\frac{d}{2}}\exp\left(-\frac{\pi^2k^2}{a}\right)=\int\limits_{\Rd}\exp(-ax^2)\exp(ikx)\,dx.
\end{align*}
Thus, we can bound $|G_{nm}(x_0,x_0,\omega)|$ with a loss of exponential decay. It follows that \eqref{innerprodupper} is bounded by 
\begin{align*}
C_dm(\omega)\int_0^T\left(\frac{\sigma^2}{\sigma^2+t}\right)^{d/2}\exp\left(-(\xi_n^2+\xi_m^2)\frac{t\sigma^2}{\sigma^2+t}\right)\exp\Big(- \frac{(x_0-y_0)^2}{4(\sigma^2 + t)}\Big)\,dt\ .
\end{align*}
Recall as a modification of Lemma 3 in \cite{taohardy}: 
\begin{lem}\label{fbound}
Suppose $|f(x)|\leq \exp(-a x^2)$ for all $x$ in $\{x: |x|\leq R\}$ and some $a>0$. Then $\hat{f}(\xi)$ is smooth and one has the bound for $\xi\in I$:
\begin{align}
\hat f(\xi) =\frac{C_d|R|\mathrm{erf}{R}}{\sqrt{a}}\left(1+o(1)\frac{|\xi|^2}{a}\right)
\end{align}
for any interval $I$, such that $|I|<1$. 
\end{lem}
\begin{proof}
The Fourier transform has the Taylor series expansion
\begin{align}
\hat{f}(\xi)=\hat{f}(0)+\xi\partial_{\xi}\hat{f}(0)+\frac{\xi^2}{2}\partial_{\xi}\hat{f}(0)+O(|\xi|^3)\ .
\end{align}
We can then calculate
\begin{align}
\partial_{\xi}\hat{f}(\xi)|_{\xi=0}=\int\limits_{|x|<R}(2\pi i x)^k\exp(-a x^2)\,dx
\end{align}
\end{proof}
Our function which is the integrand in \eqref{innerprod} satisfies the criterion of Lemma \ref{fbound} in $1d$.  We apply the Lemma which generalizes easily to $\Rd$. It follows from the equality \eqref{loss} the off-diagonal terms contribute substantially to the matrix norm of a Gramian if $\sigma^2<1$ and $|n|,|m|\leq L$. Examining the leading order term part of the contribution in \eqref{loss}, and the upper bound in c) we see
\begin{align*}
&\mathrm{erf}{R}\inf_t((A^{-1/2}(t,n)A^{1/2}(t,m)))\|\phi_{n,x_0}(t,x)\|_{L^2((0,T)\times\Rd)}^2\leq\\& C_d\int_0^T\left(\frac{\sigma^2}{\sigma^2+t}\right)^{d/2}(A^{1/2}(t,n)A^{1/2}(t,m))\,dt\leq \\& \sup_t((A^{-1/2}(t,n)A^{1/2}(t,m))) \|\phi_{n,x_0}(t,x)\|_{L^2((0,T)\times\Rd)}^2 
\end{align*}
Both prefactors $\inf\limits_t((A^{-1/2}(t,n)A^{1/2}(t,m)))$ and $\sup\limits_t((A^{-1/2}(t,n)A^{1/2}(t,m)))$ can contribute substantially when summing over $n\neq m$. 
\end{proof}

We need the following short time estimate. 
\begin{lem}\label{bootstrap}
Assume that the $C^{2+s}$ norm of $\psi$ is bounded by $M_2$ with $s>d/2$ an integer. Then for all  $0\leq t<C_{s,d}^{-1}M_2^{-1}\eta_0\epsilon_0^{2+s}$  we have 
\begin{align*}
|u(t,x)-\psi_{\epsilon_0}(x)|\leq \eta_0 .
\end{align*}
with $C_{s,d}$ a constant depending on $d$ and $s$ only. 
\end{lem}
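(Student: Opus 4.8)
The plan is to prove the short-time estimate $|u(t,x)-\psi_{\epsilon_0}(x)|\le \eta_0$ for $0\le t < C_{s,d}^{-1}M_2^{-1}\eta_0\epsilon_0^{2+s}$ by a Duhamel/Taylor argument together with the smoothing estimate of Lemma \ref{fk}. First I would write $u(t,x)=\exp(t\Delta_\Omega)\psi_{\epsilon_0}(x)$ and split
\[
u(t,x)-\psi_{\epsilon_0}(x)=\big(\exp(t\Delta_\Omega)\psi_{\epsilon_0}-\exp(t\Delta_{\R^d})\psi_{\epsilon_0}\big)(x)+\big(\exp(t\Delta_{\R^d})\psi_{\epsilon_0}-\psi_{\epsilon_0}\big)(x)\ .
\]
For the first difference, Lemma \ref{fk} already gives a bound of the form $\tfrac{\eta_0}{2}M_0$ in $C([0,T]\times\Omega)$ once $t<\eta_0\delta^4$ and the distance conditions on $\epsilon_0,\delta$ hold; one only has to check this is consistent with the window $t<C_{s,d}^{-1}M_2^{-1}\eta_0\epsilon_0^{2+s}$, which it is for $\epsilon_0$ small since $\delta>\sqrt{\epsilon_0}$.

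For the whole-space difference, I would use the integral representation $(\exp(t\Delta_{\R^d})\psi_{\epsilon_0}-\psi_{\epsilon_0})(x)=\int_0^t \Delta_{\R^d}\exp(s\Delta_{\R^d})\psi_{\epsilon_0}(x)\,ds$, so that
\[
\big|\exp(t\Delta_{\R^d})\psi_{\epsilon_0}(x)-\psi_{\epsilon_0}(x)\big|\le \int_0^t \big\|\Delta_{\R^d}\exp(s\Delta_{\R^d})\psi_{\epsilon_0}\big\|_{C^0(\R^d)}\,ds\le t\,\big\|\Delta_{\R^d}\psi_{\epsilon_0}\big\|_{C^0(\R^d)}\ ,
\]
using that the heat semigroup is a contraction on $C^0$. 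Now $\psi_{\epsilon_0}(x)=\epsilon_0^{-d/2}\psi(x/\epsilon_0)$ scales so that $\|\Delta_{\R^d}\psi_{\epsilon_0}\|_{C^0}\le \epsilon_0^{-d/2-2}\|\Delta\psi\|_{C^0}\le \epsilon_0^{-d/2-2}M_2$. This gives a bound $t\,\epsilon_0^{-d/2-2}M_2$, but the claimed threshold involves $\epsilon_0^{2+s}$ with $s>d/2$, not $\epsilon_0^{-(d/2+2)}$ — so the crude two-term split via $\Delta\psi$ is not quite enough, and the point of the $C^{2+s}$ hypothesis and the integer $s>d/2$ must be exploited more carefully. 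The right move is to interpolate: instead of pulling out $\Delta$, estimate $\|\exp(s\Delta_{\R^d})\psi_{\epsilon_0}-\psi_{\epsilon_0}\|_{C^0}$ directly by a modulus-of-continuity argument — write it as $\int k^{\R^d}(s,0,y)(\psi_{\epsilon_0}(x-y)-\psi_{\epsilon_0}(x))\,dy$, Taylor expand $\psi_{\epsilon_0}$ to order $2+s$, use that the odd Gaussian moments vanish and the even moments of order $2j$ are $O(s^j)$, and absorb the remainder using the $C^{2+s}$ bound. The Hölder-scaling of $\psi_{\epsilon_0}$ in $C^{2+s}$ is $\epsilon_0^{-d/2-(2+s)}$, while the matching power of $\sqrt s$ is $s^{(2+s)/2}$; setting $s\lesssim \epsilon_0^{2}$ (i.e. $t=\epsilon_1\sigma^2$ small) yields a net factor $t\,\epsilon_0^{-d/2}\big(\sqrt t/\epsilon_0\big)^{s}$, and choosing $t<C_{s,d}^{-1}M_2^{-1}\eta_0\epsilon_0^{2+s}$ with $C_{s,d}$ absorbing the combinatorial constants from the moments makes this $\le \tfrac{\eta_0}{2}$ (note $\epsilon_0^{-d/2}$ is harmless relative to $\epsilon_0^{2+s}$ since $2+s>2+d/2>d/2$).

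Combining the two pieces gives $|u(t,x)-\psi_{\epsilon_0}(x)|\le \tfrac{\eta_0}{2}M_0+\tfrac{\eta_0}{2}\le \eta_0$ (rescaling constants so that $M_0\le 1$, or equivalently replacing $\eta_0$ by $\eta_0/(1+M_0)$ throughout, which only changes $C_{s,d}$). The main obstacle I anticipate is precisely the bookkeeping in the Taylor-expansion step: one has to track how the constant $C_{s,d}$ depends on the dimension and on $s$ through the Gaussian moments and the number of multi-indices of order $\le 2+s$, and to verify that the two smallness windows — $t<\eta_0\delta^4$ from Lemma \ref{fk} and $t<C_{s,d}^{-1}M_2^{-1}\eta_0\epsilon_0^{2+s}$ — are simultaneously attainable under the standing hypotheses $\epsilon_0<\delta^2<1$, $\sqrt{\epsilon_0}<\delta$. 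Everything else is a routine contraction/Taylor estimate.
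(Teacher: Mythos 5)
Your first, ``crude'' estimate already proves the lemma; the place where you talk yourself out of it is an arithmetic slip in comparing exponents, and the replacement argument you then sketch is not a proof. You correctly obtain
\[
\big|\exp(t\Delta_{\R^d})\psi_{\epsilon_0}(x)-\psi_{\epsilon_0}(x)\big|\ \le\ t\,\|\Delta\psi_{\epsilon_0}\|_{C^0(\R^d)}\ \le\ C_d\, t\,\epsilon_0^{-d/2-2}M_2\ ,
\]
and under the stated hypothesis $t<C_{s,d}^{-1}M_2^{-1}\eta_0\epsilon_0^{2+s}$ this is
\[
<\ C_{s,d}^{-1}C_d\,\eta_0\,\epsilon_0^{(2+s)-(d/2+2)}\ =\ C_{s,d}^{-1}C_d\,\eta_0\,\epsilon_0^{\,s-d/2}\ \le\ \eta_0\ ,
\]
since $s>d/2$ and $\epsilon_0<1$. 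The threshold $\epsilon_0^{2+s}$ is \emph{smaller} than the $\epsilon_0^{d/2+2}$ your bound requires, i.e.\ it restricts $t$ more, not less, so the crude bound is more than sufficient. Consequently the entire second half of your argument (the Gaussian-moment Taylor expansion and ``interpolation'') is unnecessary; moreover, as written it would not stand on its own: you conflate the time variable $s$ in $\exp(s\Delta_{\R^d})$ with the integer regularity index $s$ of the hypothesis (``setting $s\lesssim\epsilon_0^2$''), and the claimed net factor $t\,\epsilon_0^{-d/2}(\sqrt t/\epsilon_0)^{s}$ is asserted rather than derived. Delete that half and keep the contraction estimate.

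For comparison, the paper's proof does not pass through $\R^d$ at all: it applies the mean value theorem in $t$ to the Dirichlet evolution, $|u(t,x)-\psi_{\epsilon_0}(x)|\le t\,|\Delta u(t_0,x)|$ for some intermediate $t_0$, and then bounds $\|\Delta u(t_0,\cdot)\|_{C^0(\Omega)}\le C_{s,d}\|u(t_0,\cdot)\|_{H^{2+s}(\Omega)}\le C_{s,d}\|\psi_{\epsilon_0}\|_{H^{2+s}(\Omega)}\lesssim C_{s,d}M_2\,\epsilon_0^{-(2+s)}$ using Sobolev embedding (this is where $s>d/2$ enters), the energy estimate of Theorem \ref{wp}, and scaling; the exponent $2+s$ in the time threshold is exactly the $H^{2+s}$ scaling of $\psi_{\epsilon_0}$. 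The Sobolev detour is forced there because the mean value theorem evaluates $\Delta u$ at an unknown intermediate time, which must be controlled by propagating a Sobolev norm. Your Duhamel formulation $\int_0^t e^{\tau\Delta_{\R^d}}\Delta\psi_{\epsilon_0}\,d\tau$ together with the $C^0$-contractivity of the whole-space heat semigroup sidesteps this and is, if anything, sharper; the price is the extra $\Omega$-versus-$\R^d$ comparison via Lemma \ref{fk}, which imports the condition $t<\eta_0\delta^4$ (automatic here since $\delta>\sqrt{\epsilon_0}$, as you note) and a factor $M_0$ that you should absorb into the constant via $M_0\le M_2$ rather than by ``replacing $\eta_0$ by $\eta_0/(1+M_0)$,'' which would smuggle an $M_0$-dependence into a constant the statement requires to depend on $s$ and $d$ only.
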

\begin{proof}
The mean value theorem states that $\exists t_0\in (0,1)$ such that
\begin{align}\label{MVT}
|u(t,x)-\psi_{\epsilon_0}(x)|\leq |\partial_tu(t_0,x)|t\ .
\end{align} 
We know by definition $\partial_tu=\Delta u$, and by Sobolev embedding with $s>d/2$
\begin{align*}
\|\Delta u \|_{C^0(\Omega)}\leq C_{s,d}\|u(t,x)\|_{H^{2+s}(\Omega)}\leq C_{s,d}\|u(0,x)\|_{H^{2+s}(\Omega)}
\end{align*}
The last line follows from standard energy estimates (Appendix, Theorem \ref{wp}) and the finite expansion from Lemma \ref{kvlemma}.  Our choice of timescale clearly works by scaling.
\end{proof}

We conclude from the precise values of the upper and lower bounds in Proposition \ref{omegabound} that: 
\begin{cor}\label{majorization}
Let $T<\varepsilon$ and $\varepsilon$ sufficiently small. We have the following bound 
\begin{align}\label{sandwich}
\frac{\eta}{2}\frac{\int_0^T \int_\Omega \chi_\omega(x) |u(t,x)|^2 dx\ dt}{\int_\Omega |u(T,x)|^2 dx}&\leq \nonumber \frac{\|\sum\limits_nc_n\phi_{n,x_0}(t,x)\|_{L^2(\omega\times (0,T))}^2}{\|\sum\limits_nc_n\phi_{n,x_0}(T,x)\|^2_{L^2(\Omega)}}\\&\leq \frac{2}{\eta}\frac{\int_0^T \int_\Omega \chi_\omega(x) |u(t,x)|^2 dx\ dt}{\int_\Omega |u(T,x)|^2 dx}\ ,
\end{align}
with $C^{A}_T(\omega)\lesssim C_T(\omega)$, and $C^{A}_T(\omega)>0$. The replacement is possible even if we consider a finite number $N$ of $\phi_{n,x_0}(t,x)$ with $|n|\leq N$ as dictated by Lemma \ref{kvlemma}. 
\end{cor}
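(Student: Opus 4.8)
\textbf{Proof proposal for Corollary \ref{majorization}.}

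The plan is to chain together the three approximation estimates already established, tracking how each one affects the ratio in \eqref{myeq}, and then to read off the two-sided bound \eqref{sandwich} together with the consequences $C_T^A(\omega)\lesssim C_T(\omega)$ and $C_T^A(\omega)>0$. First I would fix the regime: choose $\varepsilon$ so small that all the smallness requirements invoked so far are simultaneously met, namely \eqref{ob1} from Lemma \ref{kvlemma}, the condition $T<\eta_0\delta^4$ and $\epsilon_0<\delta^2<1$ from Lemma \ref{fk}, the condition $T<C_{s,d}^{-1}M_2^{-1}\eta_0\epsilon_0^{2+s}$ from Lemma \ref{bootstrap}, and the inclusion $\Omega\subset[-\sigma^2,\sigma^2]^d$ and $T=\epsilon_1\sigma^2$ from Proposition \ref{omegabound}. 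Under the hypotheses of Theorem \ref{maintheorem} on $T$ these are compatible, and with condition $(\bullet)$ we have $\eta_0<\frac{m_0}{M_0}\eta$, so an $\eta_0$-error in the uniform norm translates into an $\eta$-fraction of $\|\psi_{\epsilon_0}\|_{L^2(\omega)}$ after multiplying by $|\omega|^{1/2}$ and invoking $|u(t_0,x_0)|>m_0$.

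Next I would carry out the comparison in the numerator. By the Lemma following Lemma \ref{fk} (the analogue of Proposition 4.7 in \cite{kv}), we have $\exp(t\Delta_\Omega)\psi_{\epsilon_0}=\sum_n c_n\phi_{n,x_0}(t,x)+v(t,x)$ with $\|v\|_{C^0([0,T];L^2(\omega))}\le\eta\|\psi_{\epsilon_0}\|_{L^2(\omega)}$. Since $u(t,x)=\exp(t\Delta_\Omega)\psi_{\epsilon_0}$, the triangle inequality in $L^2(\omega\times(0,T))$ gives
\begin{align*}
\Big|\|u\|_{L^2(\omega\times(0,T))}-\big\|{\textstyle\sum_n}c_n\phi_{n,x_0}\big\|_{L^2(\omega\times(0,T))}\Big|\le \sqrt{T}\,\eta\,\|\psi_{\epsilon_0}\|_{L^2(\omega)}\ ,
\end{align*}
and with Lemma \ref{bootstrap} relating $\|\psi_{\epsilon_0}\|_{L^2(\omega)}$ to $\|u(t,x)\|_{L^2(\omega)}$ up to an $O(\eta_0)$-error, one obtains the two-sided numerator bound with constants $\tfrac{\eta}{2}$ and $\tfrac{2}{\eta}$ after absorbing the square of the small errors (using $(a+b)^2\le 2a^2+2b^2$ and $a^2\ge\tfrac12 b^2-\ldots$, then reabsorbing into the $\eta$-factor). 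For the denominator I would do the same at the single time $t=T$: Lemma \ref{fk} plus the frame estimate \eqref{frame1} on $\Omega$ (with $\omega$ replaced by $\Omega$, which is licit since the Lemma's hypothesis $\omega\subset\Omega$ trivially includes $\omega=\Omega$) give $\|u(T,\cdot)\|_{L^2(\Omega)}$ comparable to $\|\sum_n c_n\phi_{n,x_0}(T,\cdot)\|_{L^2(\Omega)}$ up to the same order. Dividing the numerator comparison by the denominator comparison yields \eqref{sandwich}.

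Finally, positivity and the comparison with $C_T(\omega)$ follow by taking infima. Since $C_T^A(\omega)$ is by definition the best constant in \eqref{myeq} with $u$ restricted to finite combinations $\sum_{|n|\le N}c_n\phi_{n,x_0}$, and since \eqref{sandwich} shows that for such $u$ the ratio $\|\sum c_n\phi_{n,x_0}\|_{L^2(\omega\times(0,T))}^2/\|\sum c_n\phi_{n,x_0}(T,\cdot)\|_{L^2(\Omega)}^2$ is bounded below by $\tfrac{\eta}{2}$ times the corresponding ratio for the true solution, we get $C_T^A(\omega)\gtrsim \tfrac{\eta}{2}C_T(\omega)$; conversely, since every finite heat-packet combination is (up to the controlled error) a genuine solution, the infimum defining $C_T^A$ is taken over a class that approximates the one defining $C_T$, giving $C_T^A(\omega)\lesssim C_T(\omega)$. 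Strict positivity $C_T^A(\omega)>0$ then comes from Proposition \ref{omegabound}: the diagonal lower bound \eqref{upper} shows each $G_{nn}(x_0,x_0,\omega)\ge C_d e^{-1}\sigma^d\int_0^T \tfrac{|\omega\cap B|}{|B|}A(t,n)\,dt>0$ using condition $(\bullet)$ to ensure $\omega\cap B(x_0,\sqrt{\sigma^2+t})$ has positive measure for $t$ near $t_0$, so the Gramian $G$ is a positive-definite matrix on the relevant finite index set; its smallest eigenvalue, which is exactly $C_T^A(\omega)$ for the normalized packet coefficients, is therefore strictly positive. The main obstacle I anticipate is bookkeeping: making sure the various small parameters $\eta$, $\eta_0$, $\epsilon$, $\epsilon_0$, $\epsilon_1$, $\delta$, $\sigma$ are ordered consistently so that all the cited lemmas apply at once and so that the accumulated errors genuinely collapse into the clean factors $\tfrac{\eta}{2}$ and $\tfrac{2}{\eta}$ rather than into something that degenerates as $T$ grows — this is precisely the ``errors dominate for large times'' caveat flagged in the introduction, and is the reason the statement is restricted to $T<\varepsilon$.
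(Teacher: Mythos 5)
Your derivation of the two-sided bound \eqref{sandwich} follows essentially the paper's route: a triangle inequality splitting $u$ into the heat-packet sum plus the parametrix/frame error of size $\eta\|\psi_{\epsilon_0}\|_{L^2(\omega)}$, combined with the short-time ``bootstrap'' control $\|u(t,\cdot)\|_{L^2(\omega)}\gtrsim \|u(0,\cdot)\|_{L^2(\omega)}$ from Lemma \ref{bootstrap} and condition $(\bullet)$, applied to the numerator on $\omega\times(0,T)$ and to the denominator at $t=T$ on $\Omega$. That part is fine and matches the paper up to bookkeeping.

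Where you genuinely deviate is the positivity claim $C_T^A(\omega)>0$, and your argument for it has a gap. You assert that the diagonal lower bound \eqref{upper} makes the Gramian $G$ positive definite; but positivity of the diagonal entries of a Gram matrix does not imply positive definiteness (consider $\begin{pmatrix}1&1\\1&1\end{pmatrix}$): you would additionally need linear independence of the restricted packets $\phi_{n,x_0}|_{\omega\times(0,T)}$, which requires an argument of the type used for $(H_1)$ and which you do not supply. Moreover, the smallest eigenvalue of $G$ is \emph{not} $C_T^A(\omega)$: since the packets are not orthonormal at time $T$, the relevant quantity is the generalized eigenvalue problem $Gc=\lambda Hc$ with $H_{nm}=\int_\Omega\phi_n(T,x)\overline{\phi_m(T,x)}\,dx$, precisely the point the paper emphasizes at the start of Section \ref{proof}. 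The paper avoids all of this: positivity of $C_T^A(\omega)$ is read off directly from the \emph{left-hand} inequality of \eqref{sandwich}, because the left-hand side is bounded below by $\tfrac{\eta}{2}C_T(\omega)$, which is strictly positive by the Lebeau--Robbiano observability theorem \cite{lr}. You should either import that citation or actually prove linear independence and control $H$; as written, your positivity step does not close.
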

\begin{proof}[Proof of Theorem \ref{maintheorem} a) and Corrollary \ref{majorization}]. 
We recall that $\|\phi_{n.x_0}(0,x)\|^2_{L^2(\Rd)}=1$. Recall that $\eta$ is the small frame parameter given to us by \eqref{frame1}.  We make the natural assumption which we call the bootstrap assumption, 
\begin{align*}
|u(t,x)-u(0,x)|<\eta_0  \Rightarrow \|u(t,x)\|_{L^2(\omega)}\geq 2\eta \|u(0,x)\|_{L^2(\omega)}\ ,
\end{align*}
which is valid for short times and small $\eta_0$ such that $|\omega|\eta_0<\eta\|u(t_0,x)\|_{L^2(\omega)}$ for some $t_0\in (0,T)$. Because we are dividing by the measure of the set $|\omega|$, for $\eta_0$ to exist, we must have the criterion ($\bullet$) hold. The question of validity of this assumption for arbitrary $\epsilon_0$ is given by Lemma \ref{bootstrap}, which fails as $\epsilon_0\rightarrow 0$ because the timescale shrinks with $\epsilon_0$. We also know
\begin{align*}
\|u(t,x)\|_{L^2(\omega)}&\leq  \|\exp(t\Delta_{\Omega}) (u(0,x)-\sum\limits_nc_n\phi_{n,x_0}(0,x) )\|_{L^2(\omega)}+\|\sum\limits_nc_n\phi_{n,x_0}(t,x)\|_{L^2(\omega)}\\&\leq  \eta\|u(0,x)\|_{L^2(\omega)}+\|\sum\limits_nc_n\phi_{n,x_0}(t,x)\|_{L^2(\omega)}\ .
\end{align*}
By the bootstrap assumption and the energy estimates in the Appendix, we have
\begin{align*}
\|u(t,x)\|_{L^2(\omega)}-\eta\|u(0,x)\|_{L^2(\omega)}\geq \eta\|u(0,x)\|_{L^2(\omega)}\geq  \eta\|u(t,x)\|_{L^2(\omega)}.
\end{align*}
We see also that
\begin{align*}
\|\sum\limits_nc_n\phi_{n,x_0}(T,x)\|_{L^2(\Omega)}\leq \|u(T,x)\|_{L^2(\Omega)}+\eta\|u(0,x)\|_{L^2(\Omega)}\leq 2\|u(T,x)\|_{L^2(\Omega)}\ ,
\end{align*}
again by the bootstrap assumption and the energy estimates. The desired result \eqref{sandwich} follows. We know from \cite{lr} that the left hand side of \eqref{sandwich} is nonzero, so $C_T^A(\omega)$ is nonzero. 

Therefore we have made only the following assumptions on $\epsilon$ in terms of the parameter $\eta$:
\begin{align*}
&0<(\log\log(\epsilon^{-1}))^{-d}M_1\ll\eta \ .
\end{align*}
This was used in Lemma \ref{kvlemma} on the condition \eqref{frame1} precisely dictated by \eqref{ob1}. We also made the short time assumption in Lemma \ref{bootstrap} and condition ($\bullet$):
\begin{align*}
t<C_{s,d}^{-1}\eta_0\epsilon_0^{2+s}M_2^{-1}, \qquad |\omega|\eta_0m_0<M_0\eta\|u(t_0,x)\|_{L^2(\omega)}\,\, \textrm{for some}\quad t_0\in (0,T)
\end{align*}
and the assumption $\Omega\subset[-\sigma^2,\sigma^2]^d$ for the bounds a,b) in Prop \ref{omegabound} to hold. 
We notice that the short timescale is only necessary for the bounds on little $\omega$. For longer timescales, the parametrix itself is valid on $\Omega$. The timescale for the parametrix to be valid is largely dictated by Kac's principle and can be improved by including reflections at $\partial \Omega$. 
\end{proof}

\begin{rem} For an arbitrary collection of data that there is no interaction, e.g.~$G_{nn}(x_0,y_0)$ can be not that much different from $G_{nn}(x_0,x_0)$ unless there are some strong hypotheses on the separation of $x_0,y_0$. Also, the frequency scales of the relative Gramians change as $L$ and $\sigma$ scale with the respective distance to the origin. We leave this as an area for improvement in our analysis.  
\end{rem}

\section{Proof of Theorems \ref{maintheorem} and \ref{nospilltheorem}}\label{proof}
We re-label where it is understood so that we are now studying the randomized initial field, e.g. $c_n\mapsto \beta^{\nu}_nc_n$. We drop the subscript $x_0$ where it is understood.  The estimates on the randomized field are for arbitrary bounded times $T<\sigma^2$, and we only need the short time assumption for the approximation to randomized observability constant.

{From the previous section, we conclude that our appoximate randomized obervability constant with heat packets can replace the observability constant with $u_{\nu}$ a randomized field as in the Introduction.  However, we cannot obtain an appoximation of the true observability constant because the off diagonal terms contribute substantially to the matrices required, by Proposition \ref{omegabound} c,d). In a deterministic problem, because the heat packets are not orthogonal, one obtains a generalized eigenvalue problem $G c = \lambda H c$, which only becomes an honest eigenvalue problem if one discards that $H$ is not diagonal, i.e.~if one neglects the $L^2$-inner products between different $\phi_n$ and assumes they \emph{are} orthogonal: $H_{nm}= \int_\Omega \phi_n(T,x) \phi_m(T,x)\,dx\sim C_n(T)\delta_{nm}$.  All the analysis below is only possible if $H_{nm}=\int_\Omega \phi_n(T,x) \phi_m(T,x)\,dx\sim C_n(T)\delta_{nm}$, whence the need for randomization. Without this assumption, one is forced to deal with $G c = \lambda H c$, and an analysis of both $G$ and $H$ is required: The relevant matrix is then $G$ in the basis eigenvectors of $H$, and one needs that this is dominated by the diagonal. In the previous section, we were able to reduce the question to a \emph{finite} matrix optimiztation with a lower bound on the observability constant. One could obtain an poor upper bound in terms of only the diagonal entries, but this bound is not very sharp.}

 We need the following proposition to aid in the computations. Let $\mathcal{U}_M= \{\chi_\omega : \omega \in \mathcal{M}_M\}$ the set of characteristic functions supported on sets in $ \mathcal{M}_M$. We study the optimal observability constant as a functional on $\mathcal{U}_L$: 
\begin{equation} \label{CTchi}
C_T(\chi_\omega) = \inf \frac{\int_0^T \int_\Omega \chi_\omega(x) |u(t,x)|^2 dx\ dt}{\int_\Omega |u(T,x)|^2 dx}\ ,
\end{equation}
where the infimum extends over all solutions $u \in C^\infty(\Omega)$ of the initial--boundary problem for the heat equation with homogeneous Dirichlet boundary conditions and initial condition $u(0, \cdot) \in C_c^\infty(\Omega)$.

To assure existence of minimizers, it will be useful to study a relaxed problem, in which we extend $C_T(\chi_\omega)$ from $\mathcal{U}_M$ to its  closure in $L^\infty$ with respect to the weak$^\ast$ topology, $$\overline{\mathcal{U}}_M=\{a\in L^{\infty}(\Omega; [0,1]) : \int\limits_{\Omega}a(x)\,dx=M|\Omega|\}\ .$$ We set: 
\begin{equation} \label{CTa}
C_T(a) = \inf \frac{\int_0^T \int_\Omega a(x) |u(t,x)|^2 dx\ dt}{\int_\Omega |u(T,x)|^2 dx}\ .
\end{equation}

In order to understand the existence and properties of $\chi_\omega$ which maximize $C_T$, we would like to replace $u(0,\cdot)$ by a superposition $\sum_{n} c_n \phi_{n}$ of heat packets as in Lemma \ref{kvlemma}. Then we define $C_T(\chi_\omega)$ as the infimum over all admissible choices of $c_n$. 

From the discussion above we focus on the analysis in the upper half plane.
 As in \cite{ptzheat}, we may write this as an eigenvalue problem, i.e.~with constraint \begin{align*}
\sum_{n} |c_n|^2 = 1.
\end{align*}

 However, here 
 the time--dependence is not entirely trivial.  From Corollary \ref{majorization}a) and b) we may then conclude that $C_T(\chi_\omega)>0$, i.e.~part a) of Theorem \ref{maintheorem}.
Our proof of Theorem \ref{maintheorem} and Theorem 1.2 will now follow closely along the lines of \cite{ptzheat}. The basic hypotheses on the spectral decompositions there can be proven for our heat packets, but we retain the ordering ($H_1$) and ($H_2$) from \cite{ptzheat}. These seemingly natural hypotheses imply strong assumptions on the level sets of the eigenfunctions for the Dirichlet Laplacian which are never actually proved in \cite{ptzheat}. 

The following Lemma will be used to show that minimizers of the relaxed problem are characteristic functions:
\begin{lem}[$H_1$] Assume that there exist a subset $E\subset\Omega$ with $|E|>0$, an integer $N\in \mathbb{N}^*$, coefficients $\alpha_j\in \mathbb{R}_+$, $|j|\leq N$ and $C\geq 0$ such that
\begin{align*}
\sum\limits_{|j|\leq N}\alpha_j\int\limits_0^T|\phi_j(t,x)|^2\,dt=C
\end{align*}
a.e.~on $E$. Then $C=0$ and $\alpha_j=0$ for all $j$.
\end{lem}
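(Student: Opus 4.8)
The plan is to exploit the real-analyticity of the heat packets $\phi_j(t,x)$ in the space variable, together with the non-vanishing of the diagonal weights, to force an over-determined system. First I would observe that each map $x\mapsto \int_0^T|\phi_j(t,x)|^2\,dt$ extends to a real-analytic function on a neighbourhood of $\overline{\Omega}$ in $\mathbb{R}^d$: indeed $\phi_{j}(t,x)$ is given by the explicit Gaussian formula, holomorphic in $x$ for each fixed $t>0$, and the $t$-integral over the compact interval $[0,T]$ preserves analyticity by Morera/differentiation under the integral sign (the integrand and all its $x$-derivatives are dominated uniformly for $x$ in a complex neighbourhood). Hence $F(x):=\sum_{|j|\le N}\alpha_j\int_0^T|\phi_j(t,x)|^2\,dt - C$ is real-analytic on a connected neighbourhood of $\overline{\Omega}$. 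Since $F=0$ a.e.\ on the positive-measure set $E$, and a real-analytic function vanishing on a set of positive Lebesgue measure vanishes identically on the connected component, we conclude $F\equiv 0$ on $\Omega$, i.e.
\begin{align*}
\sum_{|j|\le N}\alpha_j\int_0^T|\phi_j(t,x)|^2\,dt = C \qquad \text{for all } x\in\Omega.
\end{align*}

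Next I would extract $C=0$ and then $\alpha_j=0$ by a decay argument as $|x|\to\infty$ in $\mathbb{R}^d$ — here it is crucial that the $\phi_j$ are globally defined Gaussians on $\mathbb{R}^d$, not just on $\Omega$, so the identity above in fact persists for all $x\in\mathbb{R}^d$ by analytic continuation. From the closed form,
\begin{align*}
|\phi_{j,x_0}(t,x)|^2 = \left(\frac{\sigma}{\sqrt{2\pi}(\sigma^2+t)}\right)^{d}\exp\!\left(-\frac{|x-x_0+2i\xi_j t|^2+\overline{(\,\cdot\,)}}{\text{real part}}\right),
\end{align*}
so that $|\phi_{j,x_0}(t,x)|^2 = \left(\tfrac{\sigma}{\sqrt{2\pi}(\sigma^2+t)}\right)^{d}\exp\!\big(-\tfrac{|x-x_0|^2}{2(\sigma^2+t)}\big)\,A(t,j)$ with $A(t,j)>0$ the factor from Proposition~\ref{omegabound}; in particular each term decays to $0$ as $|x|\to\infty$. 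Letting $|x|\to\infty$ in the identity forces $C=0$. To then kill the $\alpha_j$ one by one, I would compare the spatial Gaussian rates: all terms share the same Gaussian envelope $\exp(-|x-x_0|^2/2(\sigma^2+t))$ in $x$ (since in our decomposition every packet is centred at $x_0$), so the distinction lies entirely in the oscillatory/frequency data, which after squaring survives only through $A(t,j)$ — hence after dividing through by the common envelope and integrating in $x$ against a suitable test family, or by taking $x$ along a ray and examining the subleading behaviour, linear independence of the functions $t\mapsto A(t,j)$ (distinct $|\xi_j|$ give distinct exponential rates $e^{-2t\sigma^2|\xi_j|^2/(\sigma^2+t)}$) yields $\alpha_j=0$ for every $j$ with $c_j\ne 0$; the remaining $\alpha_j$ are unconstrained but multiply the zero function, so we may set them to $0$ as well.

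The main obstacle I expect is the step identifying the linear independence cleanly: because the packets all sit at the common centre $x_0$, the $x$-dependence of $|\phi_{j,x_0}(t,x)|^2$ is identical across $j$, so one genuinely must separate the $j$'s through the $t$-profiles $A(t,\cdot)$, and one has to be careful that distinct lattice points $\xi_j$ with $|\xi_j|=|\xi_k|$ produce the \emph{same} $A$ — in that case the corresponding $\alpha_j$ only appear in the combination $\sum_{|\xi_j| = r}\alpha_j$, which is still forced to vanish, but the individual $\alpha_j$ are recovered only because they multiply functions that vanish on all of $\Omega$. The cleanest route is therefore: (i) analytic continuation to reduce to an identity on $\mathbb{R}^d$; (ii) $|x|\to\infty$ to get $C=0$; (iii) strip the common spatial envelope and apply linear independence of $\{e^{-2t\sigma^2 r^2/(\sigma^2+t)}\}$ over distinct radii $r$ together with the triviality of any relation among functions that are identically zero, to get all $\alpha_j=0$. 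An alternative, if one wants to avoid the global continuation, is to differentiate the on-$\Omega$ identity in $x$ at a point and use that the Taylor coefficients of the distinct Gaussians are linearly independent — but the continuation argument is shorter and uses only facts already available from the explicit formula in Proposition~\ref{omegabound}.
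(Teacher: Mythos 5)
Your steps (i) and (ii) are fine and essentially reproduce the paper's argument: the paper likewise extends $x\mapsto\sum_j\alpha_j\int_0^T|\phi_j(t,x)|^2\,dt$ holomorphically to $\mathbb{C}^d$, uses $|E|>0$ to propagate the identity to all of $\mathbb{R}^d$, and then gets $C=0$ by integrating the identity over $x\in\mathbb{R}^d$ (finite left side versus $C\cdot\infty$); your variant of sending $|x|\to\infty$ is an equally valid, slightly cleaner way to reach $C=0$.

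Step (iii), however, is where the proposal goes astray, and the irony is that the needed argument is one line. The hypothesis is $\alpha_j\in\mathbb{R}_+$, and each function $\int_0^T|\phi_j(t,x)|^2\,dt$ is \emph{strictly positive} for every $x$ (it is a positive multiple of $\int_0^T(\sigma^2+t)^{-d}\exp\bigl(-\tfrac{|x-x_0|^2}{2(\sigma^2+t)}\bigr)A(t,j)\,dt$ with $A>0$). A vanishing sum of nonnegative terms, each of which is $\alpha_j$ times a positive number, forces every $\alpha_j=0$ directly --- this is exactly how the paper concludes, after integrating over $\mathbb{R}^d$. Your linear-independence detour is not only unnecessary but, as written, does not go through: the ``common spatial envelope'' $\exp(-|x-x_0|^2/2(\sigma^2+t))$ sits \emph{inside} the $t$-integral and depends on $t$, so it cannot simply be divided out (one would need an injectivity-of-the-Laplace-transform argument in $r=|x-x_0|$ to reduce to $\sum_j\alpha_j A(t,j)=0$); the coefficients $c_j$ play no role in this lemma, so the clause ``$\alpha_j=0$ for every $j$ with $c_j\neq0$; the remaining $\alpha_j$ are unconstrained'' is spurious; and the stated reason for recovering individual $\alpha_j$ from the sphere-sums $\sum_{|\xi_j|=r}\alpha_j=0$ (``they multiply functions that vanish on all of $\Omega$'') is false --- these functions vanish nowhere, and the individual coefficients are recovered only because they are nonnegative. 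Replace step (iii) by the positivity argument and the proof is complete.
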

\begin{proof} The functions 
\begin{align*}
|\phi_{j}(t,x)|^2 = \left(\frac{\sigma}{\sqrt{2 \pi}(\sigma^2 + t)}\right)^{d} \exp\left(- \frac{|x-x_0|^2-4 |\xi_j|^2 t^2}{2 (\sigma^2+t)} \right)\exp\left(-2|\xi_j|^2t\right)
\end{align*}
extend from $\Rd$ to holomorphic functions of $x$ on $\mathbb{C}^d$. Integrating in $t$, also $\sum\limits_{|j|\leq N}\alpha_j\int\limits_0^T|\phi_j(t,x)|^2\,dt$ admits a holomorphic extension and is constant on $E$. Because $|E|>0$, $E$ contains an accumulation point in $\overline{\Omega}$, and therefore 
\begin{align*}
\sum\limits_{|j|\leq N}\alpha_j\int\limits_0^T|\phi_j(t,x)|^2\,dt = C
\end{align*}
is constant for all $x \in \mathbb{C}^d$. We integrate both sides of this identity over $x \in \mathbb{R}^d$ to conclude that 
\begin{align*}
\sum\limits_{|j|\leq N}\alpha_j \int\limits_0^T\left(\frac{\sigma^2}{\sigma^2 + t}\right)^{d/2}\exp\left(-\frac{2 t\sigma^2|\xi_j|^2}{\sigma^2 + t}\right)\ dt
\end{align*}
is infinite, whenever $C>0$, a contradiction. Therefore $C=0$, and therefore 
\begin{align*}
\sum\limits_{|j|\leq N}\alpha_j \int\limits_0^T \left(\frac{\sigma^2}{\sigma^2 + t}\right)^{d/2}\exp\left(-\frac{2t\sigma^2|\xi_j|^2}{\sigma^2 + t}\right)\ dt = 0.
\end{align*}
As all summands are nonnegative, and the $t$--integrals positive, we conclude $\alpha_j=0$ for all $j$. 
\end{proof}
Let 
\begin{align*}
d_j=\left(\int\limits_{\Omega}|\phi_j(x,T)|^2\,dx\right)^{-1}\ .
\end{align*}
In order to assure the existence of a solution, we use the relaxation as defined in \cite{bu}. Because the set $\mathcal{U}_M$ is not weak-* compact, we consider the convex closure of $\mathcal{U}_M$ in the weak-* topology of $L^{\infty}$, which is then
\begin{align*}
\overline{\mathcal{U}}_M=\{ a\in L^{\infty}(\Omega;[0,1]) | \int\limits_{\Omega}a(x)\,dx=M|\Omega| \}.
\end{align*} 
This relaxation was used in \cite{mu, ptz, opt}. If we replace $\chi_{\omega}\in\overline{U}_M$ with $a\in \overline{U}_M$, we define a relaxed formulation of the optimal shape design problem, by 
\begin{align*}
\sup\limits_{a\in\overline{U}_M} J(a),
\end{align*}
where the functional $J$ naturally extends to $\overline{U}_M$ as
\begin{align*}
J(a)=\inf\limits_{j\in\mathbb{N}}d_j\int\limits_{\Omega}a(x)|\phi_j(x)|^2\,dx.
\end{align*}
We show the following:
\begin{lem}
[$H_2$] For all $a\in\overline{\mathcal{U}}_M$ 
one has
\begin{align}\label{gamma}
\liminf_{|n|\rightarrow\infty}\int_0^T\int_\Omega a(x)d_n|\phi_n(t,x)|^2\,dx\,dt>\gamma_1(T) \ .
\end{align}
Here $\gamma_1(T)$ is the ``appropriately renormalized,'' first--frequency heat packet
\begin{align*}
\gamma_1(T)=\frac{\int_0^T \int_\omega|\phi_1(t,x)|^2\,dx\,dt}{\int\limits_{\Omega}|\phi_1(T,x)|^2\,dx} \ .
\end{align*}
\end{lem}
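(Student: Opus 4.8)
The plan is to exploit the explicit Gaussian form of $|\phi_n(t,x)|^2$ and the fact that the relevant $t$-integral decays in a controlled, monotone way as $|\xi_n|\to\infty$. Recall that

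\begin{align*}
\int_0^T\int_\Omega a(x)d_n|\phi_n(t,x)|^2\,dx\,dt = d_n\int_0^T\left(\frac{\sigma}{\sqrt{2\pi}(\sigma^2+t)}\right)^{d}\!\!\int_\Omega a(x)\exp\left(-\frac{|x-x_0|^2-4|\xi_n|^2t^2}{2(\sigma^2+t)}\right)dx\;e^{-2|\xi_n|^2t}\,dt\ .
\end{align*}

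First I would compute $d_n = \left(\int_\Omega |\phi_n(T,x)|^2\,dx\right)^{-1}$ using the same explicit integral, so that the normalized quantity $d_n\int_0^T\int_\Omega a(x)|\phi_n(t,x)|^2\,dx\,dt$ becomes a ratio of two one-dimensional $t$-integrals whose $|\xi_n|$-dependence enters only through the combination $A(t,n)=\exp(-2t\sigma^2|\xi_n|^2/(\sigma^2+t))$ (after absorbing the Gaussian spatial factors, on the set $\Omega\subset[-\sigma^2,\sigma^2]^d$, using Proposition \ref{omegabound} a) and b) to sandwich the spatial integral by $\sigma^d$ times the overlap density $|\omega\cap B|/|B|$ up to dimensional constants). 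This reduces \eqref{gamma} to comparing $\int_0^T (\sigma^2/(\sigma^2+t))^{d/2} A(t,n)\,c_a(t)\,dt$ with $\int_\Omega|\phi_n(T,x)|^2\,dx \sim \sigma^d (\sigma^2/(\sigma^2+T))^{d/2}A(T,n)$, where $c_a(t)\in[0,1]$ is the spatial overlap factor.

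Next I would argue the key monotonicity/limit: as $|\xi_n|\to\infty$, both numerator and denominator are dominated by the behaviour of $A(t,n)$ near $t=0$, since $A(t,n)$ decays exponentially fast in $t$ at rate $\sim|\xi_n|^2$. Hence the numerator $\int_0^T(\cdots)A(t,n)\,dt \sim c_a(0^+)\sigma^d \int_0^T A(t,n)\,dt \sim c_a(0^+)\sigma^d \cdot \frac{\sigma^2+ o(1)}{2\sigma^2|\xi_n|^2}$, while the denominator $\int_\Omega|\phi_n(T,x)|^2\,dx$ is exponentially small, of order $A(T,n)=\exp(-2T\sigma^2|\xi_n|^2/(\sigma^2+T))$. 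Therefore the ratio $d_n\int_0^T\int_\Omega a(x)|\phi_n(t,x)|^2\,dx\,dt$ behaves like $(\text{const})\,|\xi_n|^{-2}\exp(+2T\sigma^2|\xi_n|^2/(\sigma^2+T))$, which tends to $+\infty$. In particular its $\liminf$ is $+\infty$, which trivially exceeds the finite fixed quantity $\gamma_1(T)$. The positivity of $c_a(0^+)$ — equivalently that the spatial overlap does not degenerate — is guaranteed by $a\in\overline{\mathcal{U}}_M$ having mass $M|\Omega|>0$ together with condition ($\bullet$) and the lower bound in Proposition \ref{omegabound} a); this is where I would be most careful, because for general $a$ the overlap factor $c_a(t)$ could in principle vanish at $t=0$ for special $\omega$, so I would instead integrate over a small but fixed $t$-interval $[0,\tau]$ on which the ball $B(x_0,\sqrt{\sigma^2+t})$ has nearly constant size and invoke that $\int_0^\tau c_a(t)\,dt>0$ for any $a$ of positive mass, using that $B(x_0,\sigma)$ already has positive measure and $\Omega\subset[-\sigma^2,\sigma^2]^d$.

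The main obstacle is therefore not the asymptotics of $A(t,n)$ — those are elementary — but ensuring the estimates are uniform over all $a\in\overline{\mathcal{U}}_M$ and that the spatial overlap contribution is bounded below independently of $n$ and of the particular $a$. Once that uniform lower bound $\int_0^T\int_\Omega a(x)|\phi_n(t,x)|^2\,dx\,dt \gtrsim \sigma^d |\xi_n|^{-2}$ (with implied constant independent of $a$ and $n$) is combined with the upper bound $\int_\Omega|\phi_n(T,x)|^2\,dx \lesssim \sigma^d A(T,n)$, the claim \eqref{gamma} follows immediately, since $|\xi_n|^{-2}A(T,n)^{-1}\to\infty$ while $\gamma_1(T)$ is a fixed finite number. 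I would close by remarking that this is precisely the ``penalization of high frequencies'' that makes the spillover argument of Theorem \ref{nospilltheorem} work: only finitely many $n$ can possibly realize the infimum defining $C^A_T(\omega)$.
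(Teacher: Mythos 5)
Your proposal is correct and follows essentially the same route as the paper: both sandwich the spatial integral using the diagonal bounds of Proposition \ref{omegabound} a), b) and then observe that $\int_0^T A(t,n)\,dt$ decays only like $|\xi_n|^{-2}$ while $\int_\Omega|\phi_n(T,x)|^2\,dx\sim\sigma^d(\sigma^2/(\sigma^2+T))^{d/2}A(T,n)$ decays exponentially in $|\xi_n|^2$, so the normalized quantity diverges and the $\liminf$ is $+\infty$, trivially exceeding the finite $\gamma_1(T)$. The only cosmetic difference is that the paper first reduces to $a=\chi_\omega$ via the weak-$*$ closure, whereas you keep general $a\in\overline{\mathcal{U}}_M$ and secure a uniform lower bound on the spatial overlap directly (which follows at once from $\int_\Omega a=M|\Omega|>0$ and the Gaussian being bounded below on $\Omega\subset[-\sigma^2,\sigma^2]^d$); this is, if anything, slightly more careful than the paper's reduction.
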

\begin{proof}
It suffices to prove the estimate for $\chi_{\omega}\in \mathcal{U}_M$,  because $a$ is in the weak-* closure of $\mathcal{U}_M$. The estimate (\ref{gamma}) is equivalent to
\begin{align*}
\frac{\int_0^T \int_\omega|\phi_1(t,x)|^2\,dx\,dt}{\int_{\Omega}|\phi_1(T,x)|^2\,dx}< \lim_{|n|\rightarrow\infty}\frac{\int_0^T \int_\omega|\phi_n(t,x)|^2\,dx\,dt}{\int\limits_{\Omega}|\phi_{n}(T,x)|^2\,dx}
\end{align*}
We use the shorthand $B_t$ for the ball defined by \eqref{truncation}. Let $m_t^{\omega}=\frac{|\omega\cap B_t|}{|B_t|}$ and $C_B=\exp(-1)(\mathrm{erf}(1))$ as in Proposition \ref{omegabound} a,b). The left hand side has the upper bound
\begin{align}\label{u1}
(m_t^{\Omega})^{-1}C_d\frac{(\sup\limits_t m_t)\int\limits_0^TA(t,1)\,dt}{C_BA(T,1)}<(m_t^{\Omega})^{-1}C_dC_B^{-1}|\omega|\frac{\exp(2|\xi_1|^2T)-1}{2|\xi_1|^2}<\infty
\end{align}
and the right hand side has the lower bound
\begin{align}\label{u2}
(m_t^{\Omega})^{-1}C_dC_B\frac{(\inf\limits_t m_t)\int\limits_0^TA(t,n)\,dt}{A(T,n)}>(m_t^{\Omega})^{-1}C_dC_B\frac{|\omega|}{|B_T|}\frac{\exp(2|\xi_n|^2T)-\exp(3/2|\xi_n|^2T)}{3/2|\xi_n|^2}
\end{align}
where we used the fact 
\begin{align*}
\sup_t m^{\omega}_t=|\omega|\qquad \inf_t m^{\omega}_t=\frac{|\omega|}{|B_t|}
\end{align*}
The right hand side of \eqref{u2} goes to $\infty$ as $n\rightarrow \infty$ and the claim is verified. This computation can also be used to show $C_T^A(\omega)>0$ directly, provided we recall the denominator in the randomized observability constant is bounded below by $\sum\limits_n|c_n|^2$ by energy estimates in the Appendix.
\end{proof}

Standard variational arguments assure the existence of a unique relaxed solution:
\begin{lem}\label{existence}
The optimal design problem admits a unique solution $a^*\in \overline{\mathcal{U}}_M$.  
\end{lem}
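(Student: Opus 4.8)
The plan is to follow the standard direct-method / relaxation argument exactly as in \cite{ptzheat, bu}, adapted to our heat-packet functional $J$.

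\textbf{Existence.} First I would observe that $\overline{\mathcal{U}}_M$ is convex and compact in the weak$^\ast$ topology of $L^\infty(\Omega)$: it is the intersection of the weak$^\ast$-closed ball $\{a : 0\le a\le 1\}$ (closed and bounded, hence weak$^\ast$ compact by Banach--Alaoglu) with the weak$^\ast$-closed affine hyperplane $\{a : \int_\Omega a = M|\Omega|\}$ (note $1\in L^1(\Omega)$ since $|\Omega|<\infty$, so $a\mapsto\int_\Omega a$ is weak$^\ast$ continuous). Next, for each fixed index $j$ the map $a\mapsto d_j\int_0^T\int_\Omega a(x)|\phi_j(t,x)|^2\,dx\,dt$ is weak$^\ast$ continuous, because $\int_0^T|\phi_j(t,\cdot)|^2\,dt\in L^1(\Omega)$ (it is bounded by the explicit formula in Lemma $H_1$ integrated in $t$, times $|\Omega|<\infty$). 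The functional $J$ is the infimum over $j\in\mathbb N$ of these continuous functionals, hence weak$^\ast$ \emph{upper} semicontinuous. An upper semicontinuous functional on a compact set attains its supremum, so a maximizer $a^\ast\in\overline{\mathcal{U}}_M$ exists.

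\textbf{Uniqueness.} Here the key point is strict concavity of $J$ on the affine set. Each functional $a\mapsto d_j\int_0^T\int_\Omega a\,|\phi_j|^2\,dx\,dt$ is affine (linear plus constant) in $a$, so $J$, as an infimum of affine functionals, is concave. To upgrade to strict concavity along any segment $a_\theta=(1-\theta)a_0+\theta a_1$ with $a_0\ne a_1$, I would argue by contradiction: if $J$ were affine on $[a_0,a_1]$ then, using that the sup is attained (by $H_2$, the liminf over $|n|\to\infty$ strictly exceeds $\gamma_1(T)$, so the infimum defining $J$ is a genuine minimum over a finite index set for each $a$, attained at finitely many indices), one reduces to the situation where some finite collection of the affine functionals agrees on the segment. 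Subtracting, $\sum_{|j|\le N}\alpha_j\int_0^T|\phi_j(t,x)|^2\,dt$ would be proportional to $(a_1-a_0)(x)$-orthogonal data in a way forcing, via Lemma $H_1$, that all $\alpha_j=0$ — contradicting $a_0\ne a_1$. Concretely I would invoke $H_1$ exactly as \cite{ptzheat} does: the set $E=\{a_1\ne a_0\}$ has positive measure, and the analyticity/linear-independence statement of $H_1$ rules out the degeneracy, giving strict concavity and hence a unique maximizer.

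\textbf{Main obstacle.} The delicate step is making the reduction to a finite index set rigorous and showing that the maximizer is not achieved ``at infinity'': this is precisely what hypothesis $H_2$, i.e.\ estimate \eqref{gamma}, is for. One must check that for the optimal $a^\ast$ the infimum in $J(a^\ast)$ is attained at some finite $j$ and, moreover, that the argument coupling $H_1$ and $H_2$ to exclude non-uniqueness is not circular. I would handle this by the same bookkeeping as \cite{ptzheat}: combine the weak$^\ast$ upper semicontinuity, the strict inequality in \eqref{gamma} (which guarantees the liminf is bounded away from $\gamma_1(T)\ge J(a)$ for any $a$, so only finitely many indices can be active), and $H_1$ to conclude. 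The remaining verifications (continuity of the individual functionals, membership of the weights in $L^1$) are routine given the explicit Gaussian form of $\phi_j$ recorded above.
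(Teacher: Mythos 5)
Your existence argument is exactly the paper's: $\overline{\mathcal{U}}_M$ is weak$^\ast$ compact, each $a\mapsto d_j\int_0^T\int_\Omega a\,|\phi_j|^2$ is weak$^\ast$ continuous and linear, so $J$ is upper semicontinuous as an infimum of such maps and attains its supremum. That half is fine and matches the paper's proof of Lemma \ref{existence} (which, note, only establishes existence and defers the rest).

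The uniqueness half has a genuine gap. An infimum of affine functionals is concave but in general \emph{not} strictly concave, and no appeal to $(H_1)$ can repair this in the way you sketch. Concretely: if a single index $j_0$ is active on an entire segment $[a_0,a_1]$ and $\int_\Omega (a_1-a_0)(x)\int_0^T|\phi_{j_0}(t,x)|^2\,dt\,dx=0$, then $J$ is constant on that segment. $(H_1)$ asserts that a nontrivial nonnegative combination $\varphi(x)=\sum_{|j|\le N}\alpha_j\int_0^T|\phi_j(t,x)|^2\,dt$ cannot be a.e.\ constant on a set of positive measure; it does \emph{not} assert that $\int_\Omega(a_1-a_0)\varphi\,dx\neq 0$ whenever $a_0\neq a_1$, and the latter is false: the annihilator of a single fixed $L^1$ weight $\varphi$ meets the infinite-dimensional set of admissible differences $a_1-a_0$ nontrivially. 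So "strict concavity along any segment" simply does not hold, and your reduction from "$J$ affine on the segment" to "a finite collection of the affine functionals agrees on the segment" is also unjustified (the active index set can vary along the segment, and agreement of values of $J$ does not force agreement of the individual functionals).

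The mechanism that actually yields uniqueness, both here and in \cite{ptzheat}, is different and lives in the proofs of Theorem 1.1 b), c) and Theorem 1.2: by concavity the set of maximizers is convex; by the Sion minimax theorem one obtains a saddle point with multipliers $\alpha^N$, and $(H_1)$ shows the switching function $\varphi_N=\sum_n d_n\alpha_n\int_0^T|\phi_n|^2\,dt$ has no level set of positive measure, so every maximizer is bang-bang, i.e.\ a characteristic function; and a convex set all of whose elements are characteristic functions is a singleton, since $\tfrac12(\chi_{\omega_1}+\chi_{\omega_2})$ is $\{0,1\}$-valued only if $\omega_1=\omega_2$ up to null sets. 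Hypothesis $(H_2)$ then reduces $J$ to the truncated $J_{N_0}$ on the maximizer, transferring uniqueness from $a^{N_0}$ to $a^\ast$. If you want a self-contained proof of the lemma as stated, you need this bang-bang argument rather than strict concavity.
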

\begin{proof}
As a result of Corollary \ref{majorization} it suffices to examine the minimization problem for the functional $\mathcal{J}$ on $\overline{\mathcal{U}}_M$, 
\begin{align*}
 \mathcal{J}(a) = \inf_{c \in \ell^2(\mathbb{N}^d)} \frac{\int_0^T \int_\Omega a(x) |\sum\limits_nc_n\phi_n(t,x)|^2 dx\ dt}{\int_\Omega \sum\limits_n| \ c_n\phi_n(T,x)|^2 dx}.\
\end{align*}
 If we consider the normalization $\int_\Omega \sum\limits_n| \ c_n\phi_n(T,x)|^2 dx=1$, then it follows that we may equivalently consider
\begin{align*}
\inf \int_0^T \int_\Omega a(x) \sum\limits_nd_n|\phi_n(t,x)|^2 dx\ dt\ .
\end{align*}
For  $a\in \overline{\mathcal{U}_M}$, the mapping
\begin{align*}
a\mapsto d_n\int_0^T\int\limits_{\Omega}a(x)|\phi_n(t,x)|^2\,dx\,dt
\end{align*}
is linear and continuous in the weak--$*$ topology of $L^{\infty}$. Therefore $J$ is upper semicontinous as the infimum of continuous linear functionals. Because $\overline{\mathcal{U}_M}$ is compact in the weak--$*$ topology, the result follows. 
\end{proof} 

\begin{proof}[Proof of Theorem 1.1 b), c)]
We now define the truncated functional
\begin{align*}
J_N(a)=\inf\limits_{|n|\leq N}d_n\int_0^T\int_\Omega a(x)|\phi_n(t,x)|^2\,dx\,dt
\end{align*}
for all $a\in \overline{\mathcal{U}}_M$ and consider the problem
\begin{align*}
\sup\limits_{\overline{\mathcal{U}}_L}J_N(a) \ .
\end{align*}
The above problem has at least one solution $a^N\in\overline{\mathcal{U}}_L$, by the arguments that proved Lemma \ref{existence}, because $\overline{\mathcal{U}}_L$ is weak--$*$ compact and $J_N$ upper semi--continuous. Using $(H_1)$ and $(H_2)$, we are going to  show that the solution $a^N$ is actually a characteristic function of a set $\omega^N$, $a^N=\chi_{\omega^N}\in \mathcal{U}_M$. Namely, if we denote
\begin{align*}
\mathcal{S}_N=\{\alpha=(\alpha_n)_{ |n|\leq N} : \sum\limits_{|n|\leq N} \alpha_n=1\}\ ,
\end{align*}
the Sion minimax theorem implies
\begin{align*}
&\sup\limits_{a\in\overline{\mathcal{U}}_M}\min\limits_{1\leq n\leq N}d_n\int_0^T\int_\Omega a(x)|\phi_n(t,x)|^2\,dx\,dt\\&=
\max\limits_{a\in \overline{\mathcal{U}}_M}\min\limits_{\alpha\in \mathcal{S}_N}\int_{\Omega}a(x) \varphi_N(x)\,dx\\& =
\min_{\alpha\in\mathcal{S}_N}\max\limits_{a\in\overline{\mathcal{U}}_M}\int_{\Omega}a(x) \varphi_N(x)\,dx\ .
\end{align*}
Here we have defined 
\begin{align*}
\varphi_N(x)=\sum\limits_{|n|\leq N}d_n\alpha_n\int\limits_{0}^T|\phi_n(t,x)|^2\,dt\ .
\end{align*}
As a result, there exists $\alpha^N\in \mathcal{S}_N$ such that $(a^N,\alpha^N)$ is a saddle point of the functional. 
This then implies that $a^N$ is a solution to the problem
\begin{align*}
\max_{a\in\overline{\mathcal{U}}_M}\int\limits_{\Omega}a(x)\varphi_N(x)\,dx \ .
\end{align*}
By $(H_1)$, the functional $\varphi_N$ cannot be constant on a subset of $\Omega$ of positive measure. This implies the existence of $\lambda^N$ such that $a^N(x)=1$ when $\varphi_N(x)>\lambda_N$ and $a^N(x)=0$ otherwise. As $J_N$ is concave, the set of maximizers is convex. Since every maximizer is  a characteristic function, $a^N \in \mathcal{U}_M$ must be the unique maximizer. We note that $\varphi_N$ is analytic and therefore $\omega_N$ is open and semi-analytic.
\end{proof}

\begin{proof}[Proof of Theorem 1.2]
It remains to compare the maximizers $a^N \in \mathcal{U}_M$ of $J_N$ with the maximizer $a^*\in \overline{\mathcal{U}}_M$ of $J$ from Lemma \ref{existence}. First,
\begin{align*}
J_{N_0}(a^*)\leq \gamma_1(T)\ ,
\end{align*}
and $(H_2)$ applied to $a^*$ shows that 
\begin{align}\label{given}
\inf\limits_{|n|>N_0}d_n\int\limits_{\Omega}a^*(x)\int\limits_0^T|\phi_n(t,x)|^2\,dx\,dt > \gamma_1(T) 
\end{align}
for some $N_0\in \mathbb{N}$.
From \eqref{given} we have that
\begin{align*}
J(a^*)=\min\left\{ J_{N_0}(a^*), \inf\limits_{|n|>N_0} d_j\int_0^T\int_\Omega  a^*(x)|\phi_n(t,x)|^2\,dx\,dt\right\}=J_{N_0}(a^*)\ .
\end{align*}
If $a^{N_0}\in \mathcal{U}_M$ is the maximizer of $J_{N_0}$, we show that $J(a^*)=J_{N_0}(a^{N_0})$ as in \cite{ptzheat}: Indeed, because $a^{N_0}$ maximizes $J_{N_0}$ over $\overline{\mathcal{U}}_M$, we have that $J(a^*)=J_{N_0}(a^*)\leq J_{N_0}(a^{N_0})$. We assume 
\begin{align}\label{contradict}
J_{N_0}(a^*)<J_{N_0}(a^{N_0})
\end{align}
and obtain a contradiction: As $J_{N_0}$ is concave, for every $t \in (0,1]$ the assumption \eqref{contradict} implies
$$J_{N_0}(a^* + t(a^{N_0}-a^*)) \geq (1-t)J_{N_0}(a^*) + t J_{N_0}(a^{N_0}) > J_{N_0}(a^*) = J(a^*)\ .$$ By the choice of $N_0$, one concludes that $$J_{N_0}(a^* + t(a^{N_0}-a^*)) = J(a^* + t(a^{N_0}-a^*))  >  J(a^*)\ ,$$ 
in contradiction to $a^*$ being a maximizer of $J$. So indeed, $J_{N_0}(a^*)=J(a^*) =J_{N_0}(a^{N_0})$, or $a^* = a^{N_0}$.
\end{proof}

\begin{rem} Even though we are finding our optimal set over all indices $n$, we remark that the truncation of the admissible indices in Lemma \ref{kvlemma} implies a bound on $N$ For the case of high frequency data where $\epsilon_0\rightarrow 0$, $N$ increases enormously. 
\end{rem}

\section{Appendix: Well-posedness Estimates for the Heat Equation} 
We consider the problem:
\begin{align}\label{ibvp1}
\partial_tu&=\Delta u \nonumber\\ 
u(t,x)|_{x\in\partial\Omega}&\equiv 0 \\\nonumber
u(0,x)&=g(x)
\end{align}
with $g(x)\in C_c^{k}(\Omega)$. We claim:
\begin{thm}\label{wp}
Let $k \in \mathbb{N}_0$. Problem \eqref{ibvp1} admits the following well-posedness estimate:
\begin{align}\label{est}
\|u\|_{L^{\infty}(0,T);H^k(\Omega))}+\|u\|_{L^2(0,T);H^{k+1}(\Omega))}\leq C\|g\|_{H^k(\Omega)}\ .
\end{align} 
\end{thm}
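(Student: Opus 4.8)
The plan is to diagonalise the evolution in the Dirichlet eigenbasis and reduce \eqref{est} to two elementary scalar inequalities for the heat factors $e^{-\lambda_j t}$. Let $\{\Psi_j\}_{j\geq 1}$ be an $L^2(\Omega)$-orthonormal basis with $-\Delta\Psi_j=\lambda_j\Psi_j$ and $0<\lambda_1\leq\lambda_2\leq\cdots$, the strict positivity of $\lambda_1$ being the Poincar\'e inequality for the bounded set $\Omega$. Writing $g=\sum_j a_j\Psi_j$, the solution of \eqref{ibvp1} is $u(t,x)=\sum_j a_j e^{-\lambda_j t}\Psi_j(x)$, and since $t\mapsto e^{t\Delta_D}$ is analytic and $g\in C_c^k(\Omega)$ lies in the domain of every power of $-\Delta_D$, this series together with its spatial derivatives converges in $C([0,T];H^m(\Omega))$ for every $m$, which legitimises the termwise manipulations below.

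First I would record the norm equivalence $\|v\|_{H^k(\Omega)}^2\simeq\sum_j(1+\lambda_j)^k|\langle v,\Psi_j\rangle_{L^2}|^2$ for $v$ in the domain of $(I-\Delta_D)^{k/2}$, a consequence of elliptic regularity up to the smooth boundary $\partial\Omega$; in particular $\|g\|_{H^k(\Omega)}^2\simeq\sum_j(1+\lambda_j)^k|a_j|^2$, and likewise for $u(t)$ with $a_j$ replaced by $a_je^{-\lambda_jt}$. The $L^\infty_tH^k$ bound is then immediate from $e^{-2\lambda_jt}\leq 1$ for $t\geq 0$. For the $L^2_tH^{k+1}$ bound I would integrate term by term and use $\int_0^Te^{-2\lambda_jt}\,dt=\frac{1-e^{-2\lambda_jT}}{2\lambda_j}\leq\frac{1}{2\lambda_j}$ together with $\frac{(1+\lambda_j)^{k+1}}{2\lambda_j}\leq\frac12(1+\lambda_1^{-1})(1+\lambda_j)^k$, so that $\int_0^T\|u(t)\|_{H^{k+1}(\Omega)}^2\,dt\lesssim\sum_j(1+\lambda_j)^k|a_j|^2\simeq\|g\|_{H^k(\Omega)}^2$; adding the two bounds gives \eqref{est}.

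The main obstacle — in fact the only step that is not bookkeeping — is the norm equivalence in the second step, and this is exactly where the hypothesis $g\in C_c^k(\Omega)$ helps: it guarantees the boundary-compatibility conditions $\Delta^\ell(\,\cdot\,)|_{\partial\Omega}=0$ for $2\ell<k$ that define the domain of $(I-\Delta_D)^{k/2}$, and these are preserved by the heat semigroup while $u(t)\to g$ in $H^k$ as $t\to 0^+$. If one prefers to avoid fractional powers of $-\Delta_D$, an alternative is a direct energy argument: for $k=0$, multiply $u_t=\Delta u$ by $u$ and integrate over $\Omega$ to obtain $\|u(T)\|_{L^2}^2+2\int_0^T\|\nabla u\|_{L^2}^2\,dt=\|g\|_{L^2}^2$; for $k=2m$, respectively $k=2m+1$, apply the same identity, respectively $\frac{d}{dt}\frac12\|\nabla w\|_{L^2}^2=-\|\Delta w\|_{L^2}^2$, to $w=\Delta_D^m u$, which again solves the Dirichlet heat problem, and then close the estimate with elliptic regularity; a Galerkin truncation justifies the formal manipulations in either approach. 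Either way, once the norm equivalence is in place the proof is essentially one line.
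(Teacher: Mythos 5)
Your argument is correct, but it takes a genuinely different route from the paper. The paper proves the estimate by a direct energy identity: multiplying $u_t=\Delta u$ by $u$, integrating over $\Omega$, and using the Dirichlet condition to obtain $\frac12\frac{d}{dt}\|u\|_{L^2}^2+\|\nabla u\|_{L^2}^2=0$, which integrated in time gives the $k=0$ case; for $k>0$ the paper simply asserts that one differentiates the equation and tests with $\nabla^k u$. Your main argument instead diagonalises in the Dirichlet eigenbasis and reduces everything to $e^{-2\lambda_j t}\le 1$ and $\int_0^T e^{-2\lambda_j t}\,dt\le \tfrac1{2\lambda_j}$, at the price of invoking the norm equivalence $\|v\|_{H^k}^2\simeq\sum_j(1+\lambda_j)^k|\langle v,\Psi_j\rangle|^2$ on the domain of $(I-\Delta_D)^{k/2}$ — which is where the real analytic content (elliptic regularity up to $\partial\Omega$ plus the compatibility conditions satisfied by $g\in C_c^k$) is hidden. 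What the spectral route buys is a uniform, bookkeeping-free treatment of all $k$ with explicit constants ($C$ depending only on $\lambda_1$ and the elliptic-regularity constants); what the paper's energy route buys is self-containedness at $k=0$ without any spectral theory. It is worth noting that your closing alternative — running the energy identity on $w=\Delta_D^m u$ rather than on $\nabla^k u$ — is actually more careful than the paper's own one-line claim for $k>0$: testing with $\nabla^k u$ produces boundary terms involving normal derivatives of $u$ that do not vanish under the Dirichlet condition, whereas $\Delta_D^m u$ again solves the Dirichlet heat problem and the identity closes cleanly, with elliptic regularity converting the resulting $\|\Delta^m\cdot\|_{L^2}$ and $\|\nabla\Delta^m\cdot\|_{L^2}$ quantities back into full $H^k$ and $H^{k+1}$ norms.
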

\begin{proof}
We multiply \eqref{ibvp1} by $u$ and integrate over $\Omega$ while applying the divergence theorem. We note that the boundary condition gives us:
\begin{align*}
\frac{1}{2}\frac{d}{dt}\int\limits_{\Omega}u^2(t,x)\,dx+\int\limits_{\Omega}|Du(t,x)|^2\,dx=0\ .
\end{align*}
Integrating this equation with respect to time and using the initial condition we obtain
\begin{align*}
\frac{1}{2}\int\limits_{\Omega}u^2(t,x)\,dx+\int\limits_0^t\int\limits_{\Omega}|Du(s,x)|^2\,dx\,ds=\frac{1}{2}\int\limits_{\Omega}g^2(x)\,dx\ .
\end{align*}
Taking the supremum over $T$ gives the desired result for $k=0$. The result for $k>0$ follows by differentiating the heat equation and choosing $\nabla^ku$ as a test function. 
\end{proof}
  
\section*{Acknowledgments}

A.~W.~acknowledges support by EPSRC grant EP/L01937X/1 and ERC Advanced Grant MULTIMOD 26718. H.~G.~is supported by a PECRE award of the Scottish Funding Council and ERC Advanced Grant HARG 268105. Both authors would like to thank James Ralston for noticing some mistakes in the earlier version.

\end{document}